\renewcommand{\emph}[1]{\textit{#1}}
\newcommand*{\email}[1]{\href{mailto:#1}{\nolinkurl{#1}}} 
\definecolor{linkred}{rgb}{0.7,0.15,0.15}
\definecolor{citegreen}{rgb}{0,0.5,0}
\definecolor{urlblue}{rgb}{0,0,0.7}
\makeatletter \@addtoreset{equation}{section}
\newtheorem{theorem}{Theorem}[section]
\newaliascnt{assumption}{theorem}
\newaliascnt{proposition}{theorem}
\newtheorem{proposition}[proposition]{Proposition}
\newaliascnt{definition}{theorem}
\newtheorem{definition}[definition]{Definition}
\newaliascnt{lemma}{theorem}
\newtheorem{lemma}[lemma]{Lemma}
\newaliascnt{example}{theorem}
\newaliascnt{corollary}{theorem}
\newaliascnt{remark}{theorem}
\newtheorem{remark}[remark]{Remark}
\newaliascnt{condition}{theorem}
\crefname{theorem}{theorem}{theorems}
\Crefname{theorem}{Theorem}{Theorems}
\crefname{assumption}{assumption}{assumptions}
\Crefname{assumption}{Assumption}{Assumptions}
\crefname{proposition}{proposition}{propositions}
\Crefname{proposition}{Proposition}{Propositions}
\crefname{definition}{definition}{definitions}
\Crefname{definition}{Definition}{Definitions}
\crefname{lemma}{lemma}{lemmas}
\Crefname{lemma}{Lemma}{Lemmas}
\crefname{example}{example}{examples}
\Crefname{example}{Example}{Examples}
\crefname{corollary}{corollary}{corollaries}
\Crefname{corollary}{corollary}{Corollaries}
\crefname{remark}{remark}{remarks}
\Crefname{remark}{remark}{Remarks}
\crefname{condition}{condition}{conditions}
\Crefname{condition}{Condition}{Conditions}
\def \N{\mathbb{N}}
\def \P{\mathbb{P}}
\def \R{\mathbb{R}}
\def\Ac{{\cal A}}
\def\Oc{{\cal O}}
\newcommand{\smallertext}[1]{\text{\fontsize{5}{5}\selectfont$#1$}}
\newcommand{\smalltext}[1]{\text{\fontsize{4}{4}\selectfont$#1$}}
\definecolor{bleudefrance}{rgb}{0.19, 0.55, 0.91}
\definecolor{darkspringgreen}{RGB}{60, 179, 113}
\definecolor{viola}{RGB}{200, 0, 255}
\def\dbA{\mathbb{A}}
\def\dbE{\mathbb{E}}
\def\dbF{\mathbb{F}}
\def\dbL{\mathbb{L}}
\def\dbN{\mathbb{N}}
\def\dbP{\mathbb{P}}
\def\dbR{\mathbb{R}}
\def\a{\alpha}
\def\b{\beta}
\def\g{\gamma}
\def\d{\delta}
\def\e{\varepsilon}
\def\l{\lambda}
\def\m{\mu}
\def\n{\nu}
\def\si{\sigma}
\def\t{\tau}
\def\f{\varphi}
\def\th{\theta}
\def\o{\omega}
\def\G{\Gamma}
\def\Th{\Theta}
\def\L{\Lambda}
\def\Si{\Sigma}
\def\cA{{\cal A}}
\def\cC{{\cal C}}
\def\cD{{\cal D}}
\def\cE{{\cal E}}
\def\cF{{\cal F}}
\def\cH{{\cal H}}
\def\cL{{\cal L}}
\def\cN{{\cal N}}
\def\cQ{{\cal Q}}
\def\cS{{\cal S}}
\def\cT{{\cal T}}
\def\cV{{\cal V}}
\def\cZ{{\cal Z}}
\def\pa{\partial}
\def\ul{\underline}
\def\ol{\overline}
\def\bp{{\mathbf{p}}}
\def\bx{{\mathbf{x}}}
\def\bX{{\mathbf{X}}}
\def\1{\mathbf{1}}
\def\by{{\mathbf{y}}}
\def\bY{{\mathbf{Y}}}
\def\bZ{{\mathbf{Z}}}
\def\bz{{\mathbf{z}}}
\def\bv{{\mathbf{v}}}
\begin{document}

\title{\bf{Optimal control of Volterra integral diffusions and application to contract theory}}\author{Dylan Possamaï\footnote{Department of Mathematics, ETH Zurich, Switzerland, dylan.possamaï@math.ethz.ch. This author gratefully acknowledges support from the SNF project MINT 205121-21981.} \quad Mehdi Talbi\footnote{Laboratoire de Probabilités, Statistiques et Modélisation, Université Paris-Cité, France, talbi@lpsm.paris} }
\date{}

\maketitle

\begin{abstract}
This paper focuses on the optimal control of a class of stochastic Volterra integral equations. Here the coefficients are regular and not assumed to be of convolution type. We show that, under mild regularity assumptions, these equations can be lifted in a Sobolev space, whose Hilbertian structure allows us to attack the problem through a dynamic programming approach. We are then able to use the theory of viscosity solutions on Hilbert spaces to characterise the value function of the control problem as the unique solution of a parabolic equation on Sobolev space. We provide applications and examples to illustrate the usefulness of our theory, in particular for a certain class of time-inconsistent principal--agent problems. As a by-product of our analysis, we introduce a new Markovian approximation for Volterra-type dynamics.
\end{abstract}


\vspace{3mm}


\section{Introduction}


Let $(\Omega, \cF, \dbF, \dbP)$ denote a fixed probability space, endowed with a standard $(\dbF,\dbP)$--Brownian motion $W$ of dimension $d\in\dbN^\star$. We consider controlled stochastic Volterra integral equations of the form
\begin{align}\label{VolterraSDE}
X_t^\a = x + \int_0^t b_r(t, X_r^\a, \a_r)\mathrm{d}r + \int_0^t \si_r(t, X_r^\a, \a_r)\mathrm{d}W_r,
\end{align}
where $x \in \dbR^n$, $\a$ lives in an appropriate space of controls $\cA$ taking its values in some Polish space $A$, and
\[
b:[0,T]^2\times\dbR^n\times A\longrightarrow \dbR^n,
\;
\si:[0,T]^2\times\dbR^n\times A\longrightarrow M_{n,d}(\dbR)
\]
are continuous in all their variables. We are interested in the optimal control problem
\begin{align}\label{control-pb}
V_0(x) \coloneqq \sup_{\a \in \cA} \dbE^{\mathbb P}\bigg[\int_0^T f(t, X_t^\a, \a_t)\mathrm{d}t + g(X_T^\a)\bigg].
\end{align}

\medskip
Given their large scope of applications, stochastic control problems of the form \eqref{VolterraSDE}--\eqref{control-pb} have received strong attention in the scientific literature. They have, for example, raised interest in medical sciences (see \emph{e.g.} \citeauthor*{schmiegel2006self} \cite{schmiegel2006self} and \citeauthor*{saeedian2017memory} \cite{saeedian2017memory}) and in finance, in particular in the study of rough volatility models (see \emph{e.g.} \citeauthor*{bayer2016pricing} \cite{bayer2016pricing} and \citeauthor*{gatheral2018volatility} \cite{gatheral2018volatility}). More broadly, Volterra-type memory effects also arise in stochastic advertising and goodwill models with carryover or distributed forgetting, as well as in systems with hereditary effects such as stochastic heat equations with memory; see, for instance, \citeauthor*{giordano2024optimal} \cite{giordano2024optimal}, \citeauthor*{gozzi2024optimal} \cite{gozzi2024optimal}, and \citeauthor*{confortola2014optimal} \cite{confortola2014optimal}.

\medskip
Recently, \citeauthor*{hernandez2024time} showed in \cite{hernandez2024time} that Volterra-type control problems naturally arise in contracting problems involving some form of time-inconsistency. These problems have quite specific features and may be viewed as \emph{extended Volterra control problems}. More precisely, the state process is now an uncountable family of processes $\bX^\a \coloneqq (X^{\a,s})_{\{s \in [0,T]\}}$, with
\begin{equation}\label{ExtendedVolterraSDE}
 X_t^{\a,s}
 = x^s
 + \int_0^t b_r\big(s, X_r^{\a,r}, X_r^{\a,s}, \a_r\big)\mathrm{d}r
 + \int_0^t \si_r\big(s, X_r^{\a,r}, X_r^{\a,s}, \a_r\big)\mathrm{d}W_r.
\end{equation}
In particular, if there exist coefficients $\bar b$ and $\bar\si$ such that
\[
b_r(s,x,y,a)=\bar b_r(s,x,a),
\;
\si_r(s,x,y,a)=\bar\si_r(s,x,a),
\]
and if the initial profile is constant, \emph{i.e.} $x^s=x$ for all $s \in [0,T]$, then the diagonal process $t \longmapsto X_t^{\a,t}$ satisfies the classical stochastic Volterra equation \eqref{VolterraSDE}.

\medskip
Volterra-type control problems have also attracted strong attention for their challenging mathematical features. Indeed, due to the presence of the $t$ in $b$ and $\si$, it is well known that the optimisation problem \eqref{control-pb} is time inconsistent: $X^\a$ is not Markov or not even a semi-martingale in general, and therefore the flow property does not apply. Various techniques have been considered to overcome this difficulty. One rather popular method is to handle the problem through a maximum principle approach, see \emph{e.g.} \citeauthor*{agram2015malliavin} \cite{agram2015malliavin}, \citeauthor*{agram2018new} \cite{agram2018new}, \citeauthor*{lin2020controlled} \cite{lin2020controlled} and 
\citeauthor*{hamaguchi2023maximum} \cite{hamaguchi2023maximum}. We also mention the recent contribution of \citeauthor*{cardenas2022existence} \cite{cardenas2022existence}, who search for an optimal control in a relaxed form.   
 
\medskip
A large number of papers focus on recovering time-consistency by embedding the problem in a larger space, in which the new state process satisfies the usual flow property. These works are often referred to as using a \emph{lifting} approach. In the contributions of \citeauthor*{abi2021linear} \cite{abi2021linear}, \citeauthor*{di2023lifting} \cite{di2023lifting} and \citeauthor*{hamaguchi2023markovian} \cite{hamaguchi2023markovian}, the kernel of the stochastic Volterra integral equation is written as the linear transform of some element defined on an appropriate Banach space (and even in a Hilbert space in the case of \cite{hamaguchi2023markovian}). This linear transformation involves some semi-group structure, so that it is possible to write the state process $X^\a$ as the image of an infinite dimensional process $\bX^\a$ by the same transformation, with $\bX^\a$ satisfying an infinite dimensional stochastic differential equation (SDE for short) and therefore satisfying the Markov property. In a slightly different approach, \citeauthor*{viens2019martingale} \cite{viens2019martingale} lifts the state process---typically a fractional Brownian motion---in the Banach space of continuous path, treating the `Volterra time' (the $t$ in $b$ and $\si$ in \eqref{VolterraSDE}) as a parameter. This approach has been used in several subsequent works, such as the ones of \citeauthor*{wang2022path} \cite{wang2022path} and \citeauthor*{wang2023linear} \cite{wang2023linear}. 

\medskip
We propose a new lifting in the same spirit as \cite{viens2019martingale}, in the case of regular kernels. More precisely, we also treat the `Volterra time' as a parameter, thus lifting the state process in a space of paths. We shall however assume that the coefficients of \eqref{ExtendedVolterraSDE} are differentiable in this parameter in the Sobolev sense. This provides a Hilbertian structure as well as continuity in the parameter, which is crucial to connect the original problem \eqref{VolterraSDE}--\eqref{control-pb} with the lifted one. Our contribution has four main features. First, unlike semigroup-based lifts, it does not rely on any specific representation of the kernel. Second, the lift is performed in a Sobolev/Hilbert space, which is precisely what allows us to use the standard viscosity theory on Hilbert spaces; see \emph{e.g.} \citeauthor*{lions1988viscosity} \cite{lions1988viscosity, lions1989viscosity, lions1989viscosity2}. Third, the framework covers the extended Volterra systems arising in time-inconsistent contract theory, with the classical Volterra dynamics appearing as a special case. Finally, the same Hilbert structure yields a natural Markovian approximation of the Volterra state through projections on finite-dimensional subspaces.

\medskip
The paper is organised as follows. In \Cref{sec:infinite-dimension}, we introduce precisely the lifted space and process, and highlight their main properties. In \Cref{sec:DPE}, the value function of the infinite dimensional problem is characterised as the unique viscosity solution to a parabolic equation on Sobolev space. Special attention is also given to the case of uncontrolled volatility. We apply our theory to several examples in \Cref{sec:examples}: beyond the introductory linear--quadratic illustration, Section~4 now contains two self-contained genuinely Volterra applications, namely a regular propagator liquidation model and a stochastic advertising/goodwill model with memory, as well as the contracting problem with a sophisticated agent. \Cref{sec:markovian-representation} discusses an interesting by-product of our analysis, namely a new Markovian representation of stochastic Volterra processes. Finally, \Cref{sec:comparison} compares our contribution with some of the aforementioned references and \Cref{sec:singular-kernel} discusses the case of singular kernels.

\section{The infinite dimensional problem}\label{sec:infinite-dimension}

Our main requirements to define a `good' lifting are the following

\medskip
$\quad\bullet$ the state space must be a Hilbert space, and the lifted state process must satisfy some flow property, as this will enable us to apply the standard theory of viscosity solutions on Hilbert space for a large class of stochastic control problems;

\medskip
$\quad\bullet$ if the original process writes as \eqref{VolterraSDE}, it must write as a continuous function of the lifted process.  

\subsection{Choice of the state space}

Let $\dbL^2([0,T],\mathrm{d}t)$ denote the equivalence class of square-integrable functions $\f:[0,T]\longrightarrow\dbR^n$, and let $C_c^1((0,T))$ be the set of $C^1$ functions $\psi$ on $(0,T)$ such that $\psi$ and $\psi^\prime$ have compact support. Introduce the Sobolev space
\begin{align}\label{state-space}
 W^{1,2}([0,T]) \coloneqq \bigg\{ u \in \dbL^2([0,T], \mathrm{d}t) : \exists u^\prime \in \dbL^2([0,T], \mathrm{d}t),\; \int_0^T u(t)\f^\prime(t) \mathrm{d}t = \int_0^T u^\prime(t) \f(t) \mathrm{d}t,\; \forall \f \in C_c^1((0,T)) \bigg\},
\end{align}
as well as its scalar product
\[
\langle u, v \rangle_{W^{1,2}} \coloneqq \int_0^T u(t)v(t)\mathrm{d}t + \int_0^T u^\prime(t)v^\prime(t)\mathrm{d}t.
\]
Let $H$ be the space defined by
\[
H \coloneqq \big\{ \bx \coloneqq (\bx_1, \dots, \bx_n) : \bx_k \in W^{1,2}([0,T]),\; \forall k \in \{1, \dots, n\}\big\},
\]
endowed with the scalar product
\[
\langle \bx, \by \rangle_H \coloneqq \sum_{k=1}^n \langle \bx_k, \by_k \rangle_{W^{1,2}},
\]
and the corresponding norm $\lVert \bx \rVert_H \coloneqq \sqrt{\langle \bx, \bx \rangle_H}$. Then $(H,\lVert\cdot\rVert_H)$ is a Hilbert space. Moreover, we have the following compact embedding result.

\begin{lemma}\label{lem:injection}
The Sobolev space $H$ is continuously and compactly embedded into $\cC \coloneqq C^0([0,T],\dbR^n)$. In particular, there exists a constant $C\ge0$ such that
\[
 \lVert \bx \rVert_\infty \coloneqq \sup_{t \in [0,T]} |\bx^t| \le C \lVert \bx \rVert_H,\; \forall \bx \in H.
\]
\end{lemma}

\begin{proof}
We prove the result in the case $n=1$; the general case follows coordinate-wise. By Sobolev embedding, each element $\bx \in H$ has a unique continuous representative on $[0,T]$, which we still denote by $\bx$ (see, for instance, \citeauthor*{brezis2011functional} \cite[Theorem 8.2]{brezis2011functional}). For any $(s,t) \in [0,T]^2$
\[
|\bx^t| \le |\bx^s| + \int_0^T |\bx^\prime(r)|\mathrm{d}r.
\]
Integrating with respect to $s$ yields
\[
|\bx^t| \le \frac{1}{T}\int_0^T |\bx(r)|\mathrm{d}r + \int_0^T |\bx^\prime(r)|\mathrm{d}r,
\]
and therefore, by Cauchy--Schwarz
\begin{equation}\label{ineq:inf-sobolev}
\lVert \bx \rVert_\infty \le C \lVert \bx \rVert_H,
\end{equation}
for some constant $C$ depending only on $T$. This proves continuity of the embedding. Now let $(\bx_m)_{m\in\N}$ be bounded in $H$. By \eqref{ineq:inf-sobolev}, the sequence is uniformly bounded in $\cC$. Moreover, for all $(s,t) \in [0,T]^2$
\[
|\bx_m^t-\bx_m^s| \le \int_s^t |\bx_m^\prime(r)|\mathrm{d}r \le |t-s|^{1/2}\|\bx_m^\prime\|_{\dbL^2([0,T])},
\]
so $(\bx_m)_{m\in\N}$ is equicontinuous. By Arzelà--Ascoli, every bounded sequence in $H$ admits a subsequence converging in $\cC$. Hence the embedding $H\hookrightarrow \cC$ is compact.
\end{proof}

\begin{remark}
From now on, we identify each element of $H$ with its continuous representative. In particular, the evaluation map
\[
 \Phi : [0,T]\times H\ni(t,\bx)\longmapsto \bx^t\in\dbR^n,
\]
is continuous and even Lipschitz-continuous in $\bx$.
\end{remark}

\subsection{The infinite dimensional dynamics}

Let
\[
b:[0,T]^2\times\dbR^n\times\dbR^n\times A\longrightarrow \dbR^n,
\;
\si:[0,T]^2\times\dbR^n\times\dbR^n\times A\longrightarrow M_{n,d}(\dbR),
\]
and introduce the $H$-valued mapping
\[
 B(t, \bx, a)(s) \coloneqq b_t\big(s, \bx^t, \bx^s, a\big),
\]
as well as the $\cL(\dbR^d,H)$-valued mapping
\[
 \Si(t, \bx, a)(s) \coloneqq \si_t\big(s, \bx^t, \bx^s, a\big),
\]
for all $(t,\bx,a,s)\in[0,T]\times H\times A\times[0,T]$, where $\cL(\dbR^d,H)$ denotes the space of bounded linear maps from $\dbR^d$ to $H$.

\medskip
Let $(\Omega,\cF,\dbP)$ be a probability space endowed with a standard $d$-dimensional Brownian motion $W$, and denote by $\dbF^W$ its natural filtration. Denote by $\cA^W$ the set of $\dbF^W$--progressively measurable processes taking their values in $A$ such that the $H$-valued SDE
\begin{equation}\label{liftedSDE1}
\bX_t^\a = \bx_0 + \int_0^t B(r, \bX_r^\a, \a_r)\mathrm{d}r + \int_0^t \Si(r, \bX_r^\a, \a_r)\mathrm{d}W_r,\; t\in[0,T],\; \dbP\text{--a.s.},
\end{equation}
has a unique strong solution. 

\medskip
Our first result is intuitively clear but act as a sanity check by formalising the links between \Cref{liftedSDE1}, \Cref{ExtendedVolterraSDE}, and \Cref{VolterraSDE}.

\begin{proposition}\label{prop:lifting-equivalence}
For each $s\in[0,T]$, let $\Pi_s:H\longrightarrow\dbR^n$ be the evaluation map $\Pi_s(\bx)\coloneqq \bx^s.$ Then $\Pi_s$ is continuous. Moreover, let $\a\in\cA^W$, and let $\bX^\a$ be the $H$-valued solution of \eqref{liftedSDE1}. Then, for every $s\in[0,T]$, the process
\[
X_t^{\a,s}\coloneqq \Pi_s(\bX_t^\a),\; t\in[0,T],\; \a\in\Ac^W,
\]
satisfies
\begin{equation}\label{pointwise-family}
X_t^{\a,s}
 = x^s
 + \int_0^t b_r\big(s, X_r^{\a,r}, X_r^{\a,s}, \a_r\big)\mathrm{d}r
 + \int_0^t \si_r\big(s, X_r^{\a,r}, X_r^{\a,s}, \a_r\big)\mathrm{d}W_r.
\end{equation}

Conversely, assume that $(X^{\a,s})_{s\in[0,T]}$ is an $H$-valued family of processes such that \eqref{pointwise-family} holds for every $s\in[0,T]$. Then the $H$-valued process $\bX^\a$ defined by $\bX_t^\a(\cdot)\coloneqq X_t^{\a,\cdot}$ solves \eqref{liftedSDE1}.

\medskip
Finally, if there exist coefficients $\bar b$ and $\bar\si$ such that
\[
b_r(s,x,y,a)=\bar b_r(s,x,a),
\;
\si_r(s,x,y,a)=\bar\si_r(s,x,a),
\]
and if $\bX_0^\a=\bp(x)$ for some $x\in\dbR^n$, then the diagonal process $t\longmapsto X_t^{\a,t}$ solves \eqref{VolterraSDE}.
\end{proposition}

\begin{proof}
The continuity of $\Pi_s$ follows immediately from \Cref{lem:injection}. If $\bX^\a$ solves \eqref{liftedSDE1}, then applying $\Pi_s$ to both sides and using the definitions of $B$ and $\Si$ yields \eqref{pointwise-family}. Conversely, if the family $(X^{\a,s})_{s\in[0,T]}$ is $H$-valued and satisfies \eqref{pointwise-family} for every $s$, then the $H$-valued identity \eqref{liftedSDE1} follows by evaluating both sides at each $s\in[0,T]$. The last assertion is the particular case in which the coefficients do not depend on the second state variable and the initial profile is constant.
\end{proof}

Our next result provides concrete sufficient conditions guaranteeing the existence and uniqueness of a $H$-valued solution to \eqref{liftedSDE1}, at least whenever the control $\a$ is fixed. The point here is that standard Lipschitz continuity of the coefficients in the finite-dimensional variables is not sufficient in general, unlike in the purely finite-dimensional case.

\begin{proposition}\label{prop:SDE}
For $\phi \in\{ b, \sigma\}$, assume that there exists two functions $\phi^1$ and $\phi^2$ such that
\[
\phi_t(s, x, y, a) = \phi_t^1(s, x, a) + \phi_t^2(s, a)y,\; \mbox{\rm for all $(t, s, x, y, a) \in [0,T]^2 \times \dbR^n\times\R^n \times A$}. 
\]
Assume furthermore that

\medskip
$(i)$ $\phi^1$ is continuous in all its variables, Lipschitz-continuous in $x$ uniformly in $(t,s,a)$, and admits a Sobolev derivative w.r.t. $s$ which is also continuous in all its variables and Lipschitz-continuous in $x$ uniformly in $(t,s,a);$

\medskip
$(ii)$ $\phi^2$ is continuous and uniformly bounded in all its variables, and admits a Sobolev derivative w.r.t.\ $s$ which is also continuous and uniformly bounded in all its variables.

\medskip
Then \eqref{liftedSDE1} has a unique solution in $H$. 
\end{proposition}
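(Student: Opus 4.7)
My plan is to verify the hypotheses of the standard existence and uniqueness theorem for Lipschitz-continuous $H$-valued SDEs (see for instance the classical monograph of Da Prato and Zabczyk on stochastic equations in infinite dimensions), specialised to the Hilbert space $H$. Concretely, I need to check three properties of the coefficients $B$ and $\Si$: $(\mathrm{i})$ they take values respectively in $H$ and in $\cL(\R^d, H)$, the latter being automatically Hilbert--Schmidt since $\R^d$ is finite-dimensional; $(\mathrm{ii})$ they are Lipschitz-continuous in $\bx$ with respect to $\lVert\cdot\rVert_H$, uniformly in $(t, a)$; and $(\mathrm{iii})$ they satisfy a linear-growth condition in $\bx$.

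For $(\mathrm{i})$, the affine-in-$y$ decomposition is crucial. Fixing $(t, \bx, a)$, the map $s \longmapsto B(t, \bx, a)(s) = b_t^1(s, \tilde\bx^t, a) + b_t^2(s, a)\tilde\bx^s$ lies in $\dbL^2([0,T],\mathrm{d}s)$ by continuity of $b^1$, boundedness of $b^2$, and the inclusion $\tilde\bx \in \dbL^2$. Its weak derivative in $s$, obtained by a standard approximation of $\bx$ by smooth functions, equals
\[
\partial_s b_t^1(s, \tilde\bx^t, a) + \partial_s b_t^2(s, a)\tilde\bx^s + b_t^2(s, a)(\tilde\bx^s)',
\]
which lies in $\dbL^2$ because the first two summands are bounded in $s$ while the third is the product of a bounded function with the $\dbL^2$-function $(\tilde\bx)'$. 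Hence $B(t, \bx, a) \in H$, and the same computation applied column by column to $\si$ yields $\Si(t, \bx, a) \in \cL(\R^d, H)$.

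For $(\mathrm{ii})$ and $(\mathrm{iii})$, I would combine \Cref{lem:injection}, which gives the pointwise bound $\lvert \tilde\bx^t - \tilde\by^t \rvert \le C\lVert\bx - \by\rVert_H$ for every $t \in [0,T]$, with the elementary estimates $\lVert \tilde\bx - \tilde\by\rVert_{\dbL^2} \le \lVert\bx - \by\rVert_H$ and $\lVert (\bx - \by)'\rVert_{\dbL^2} \le \lVert\bx - \by\rVert_H$. Plugging these into the Lipschitz-continuity of $\phi^1$ and $\partial_s \phi^1$ in $x$, together with the uniform bounds on $\phi^2$ and $\partial_s \phi^2$, I obtain a constant $L$ depending only on the problem data such that
\[
\lVert B(t, \bx, a) - B(t, \by, a)\rVert_H + \lVert \Si(t, \bx, a) - \Si(t, \by, a)\rVert_{\mathrm{HS}} \le L\lVert\bx - \by\rVert_H,
\]
uniformly in $(t, a)$. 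Linear growth follows from the same inequalities applied to $\by = 0$, combined with the continuity of $\phi^1(\cdot, 0, a)$ and $\partial_s \phi^1(\cdot, 0, a)$ on compact sets of $a$.

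Once $(\mathrm{i})$--$(\mathrm{iii})$ are established, a standard Picard iteration in $\dbL^2(\Omega; C^0([0,T], H))$ produces the unique strong $H$-valued solution of \eqref{liftedSDE1}, the Bochner and stochastic integrals being well-defined on the separable Hilbert space $H$. I expect step $(\mathrm{i})$ to be the principal obstacle, and it is also the reason the affine structure in $y$ can hardly be dispensed with: for a general nonlinear dependence in $y$, the weak derivative of $s \longmapsto \phi_t(s, \tilde\bx^t, \tilde\bx^s, a)$ would acquire an extra term $\partial_y \phi_t(s, \tilde\bx^t, \tilde\bx^s, a)(\tilde\bx^s)'$, whose $\dbL^2$-integrability cannot be ensured without an additional uniform bound on $\partial_y \phi$, a condition essentially equivalent to the linearity in $y$ that the proposition postulates.
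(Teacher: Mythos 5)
Your proposal is correct and follows essentially the same route as the paper: both proofs reduce the statement to the standard Lipschitz existence--uniqueness theorem for $H$-valued SDEs (the paper cites Gawarecki--Mandrekar, you invoke Da Prato--Zabczyk/Picard iteration, which is the same machinery) and verify the same three points --- membership of $B,\Si$ in $H$ via the affine-in-$y$ decomposition and the explicit weak derivative, and Lipschitz continuity plus linear growth via the Sobolev embedding of \Cref{lem:injection}. Your closing observation on why linearity in $y$ is needed also matches \Cref{rem:weak-existence}.
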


\begin{proof}
We check the assumptions of \citeauthor*{gawarecki2011stochastic} \cite[Theorem 3.3]{gawarecki2011stochastic}. For simplicity, we argue in the case $n=1$; the multidimensional case is obtained coordinate-wise.

\medskip
Fix $(t,\bx,a)\in[0,T]\times H\times A$. By definition
\[
B(t,\bx,a)(s)=b_t^1(s,\bx^t,a)+b_t^2(s,a)\bx^s.
\]
Since $s\longmapsto b_t^1(s,\bx^t,a)$ and $s\longmapsto b_t^2(s,a)$ have Sobolev regularity by assumption, and since $\bx\in H$, it follows that $B(t,\bx,a)\in H$. The same argument applies to $\Si(t,\bx,a)$.

\medskip
We secondly verify that $B$ and $\Si$ have linear growth in $\bx$, uniformly in $(t,a)$. We denote by $\pa_s$ the derivation w.r.t. $s$ in the Sobolev sense. We have
\begin{align*}
\lVert B(t, \bx, a) \rVert_H^2 &= \int_0^T \big(b_t(s, \tilde \bx^t, \tilde \bx^s, a)\big)^2\mathrm{d}s + \int_0^T \big(\pa_s(b_t(s, \tilde \bx^t, \tilde \bx^s, a))\big)^2\mathrm{d}s \\
&= \int_0^T \big(b_t^1(s, \tilde \bx^t, a) + b_t^2(s,a) \tilde \bx^s\big)^2 \mathrm{d}s + \int_0^T \big(\pa_s b_t^1(s, \tilde \bx^t, a) + \pa_s b_t^2(s,a) \tilde \bx^s + b_t^2(s,a) (\widetilde{\pa_s \bx})^s \big)^2 \mathrm{d}s. 
\end{align*}
Since $b^1$ and $\pa_s b^1$ are Lipschitz-continuous in their space variable, uniformly in $(t,s,a)$, we have for some constant $C$
\[
 \lvert b_t^1(s, \tilde \bx^t, a) \rvert + \lvert \pa_s b_t^1(s, \tilde \bx^t, a) \rvert \le C(1 + \lvert \tilde \bx^t \rvert) \le C(1 + \lVert \bx \rVert_H), 
 \]
see \eqref{ineq:inf-sobolev} for the latter inequality. Then, we easily deduce from the boundedness of $b_t^2$ and $\pa_s b_t^2$ that $B$, and similarly $\Si$, have quadratic growth in $\bx$. 

\medskip
We finally prove that $B$ and $\Si$ are Lipschitz-continuous in $\bx$. Fix $(\bx, \by) \in H^2$, we have
\begin{align*}
\lVert B(t, \bx, a) - B(t, \by, a) \rVert_H^2 &\le \int_0^T \big(\lvert b_t^1(s, \tilde \bx^t, a) - b_t^1(s, \tilde \by^t, a) \rvert + | b_t^2(s, a) \tilde \bx^s - b_t^2(s, a) \tilde \by^s |  \big)^2 \mathrm{d}s \\
&\quad+ \int_0^T \big( | \pa_s b_t^1(s, \tilde \bx^t, a) - \pa_s b_t^1(s, \tilde \bx^t, a)| + | \pa_s b_t^2(s,a) \tilde \bx^s - \pa_s b_t^2(s,a) \tilde \bx^s |  \\
&\qquad + | b_t^2(s,a) \widetilde{\pa_s \bx}^s - b_t^2(s,a) \widetilde{\pa_s \by}^s |   \big)^2 \mathrm{d}s \\
&\le C\bigg( \int_0^T \big( | \tilde \bx^t - \tilde \by^t | + | \tilde \bx^s - \tilde \by^s | \big)^2 \mathrm{d}s + \int_0^T  \big(| \tilde \bx^s - \tilde \by^s | + | \widetilde{\pa_s \bx}^s - \widetilde{\pa_s \by}^s | \big)^2 \mathrm{d}s \bigg),
\end{align*}
for some constant $C>0$, where we used the Lipschitz-continuity of $b^1$ and $\pa_s b^1$ and the boundedness of $b^2$ and $\pa_s b^2$. Recalling \eqref{ineq:inf-sobolev}, we have
\[
 | \tilde \bx^t - \tilde \by^t | \le  \lVert \tilde \bx - \tilde \by \rVert_\infty \le \lVert  \bx^t -  \by^t \rVert_H, 
 \]
from which we finally deduce that $B$ is Lipschitz-continuous in $\bx$, uniformly in {\color{black}$(t,a)$.} We proceed similarly for $\Si$. 
\end{proof}

\begin{remark}\label{rem:weak-existence}
The linear dependence on the second state variable is used only to obtain a strong $H$-valued formulation with Lipschitz coefficients. Without this structure, one may still expect weak well-posedness, see, for instance, {\rm \cite[\S 3.9]{gawarecki2011stochastic}}. For singular kernels, there is by now a substantial recent literature on weak and strong well-posedness, including weak solutions for convolution kernels {\rm \citeauthor*{abijaber2021weak} \cite{abijaber2021weak}}, for more general kernels {\rm \citeauthor*{promel2023existence} \cite{promel2023existence}}, {\rm \citeauthor*{abijaber2025weak} \cite{abijaber2025weak}}, and strong or pathwise-uniqueness results in singular Hölder settings {\rm \citeauthor*{promel2023stochastic} \cite{promel2023stochastic}}, {\rm \citeauthor*{promel2025pathwise} \cite{promel2025pathwise}}, or {\rm \citeauthor*{hamaguchi2025weak} \cite{hamaguchi2025weak}}.

\medskip
Note also that when $\bX$ is the lifted version of {\rm \Cref{VolterraSDE}}, the term in $\bx^s$ is not involved in \eqref{liftedSDE1}, and therefore the well-posedness of \eqref{liftedSDE1} directly proceeds from the well-posedness of \eqref{VolterraSDE}. 

\end{remark}

\begin{remark}\label{rem:advantages}{
This infinite-dimensional process enjoys two important properties with respect to the original Volterra-type dynamics.

\medskip
$(i)$ The process $\bX^\a$ solves a stochastic differential equation, whereas the process $X^\a$ defined in \eqref{VolterraSDE} solves a stochastic integral equation. In particular, $\bX^\a$ is a semi-martingale.

\medskip
$(ii)$ For $x \in \dbR^n$, denote by $\bp(x)$ the element of $H$ such that $\bp(x)^t=x$ for all $t\in[0,T]$. Assume that $\bX_0^\a=\bp(x)$, and that the coefficients are of the classical form
\[
b_r(s,x,y,a)=\bar b_r(s,x,a),
\;
\si_r(s,x,y,a)=\bar\si_r(s,x,a).
\]
Then, by {\rm\Cref{prop:lifting-equivalence}}, the diagonal process $t\longmapsto X_t^{\a,t}$ satisfies \eqref{VolterraSDE}. Therefore, whenever uniqueness holds for the latter equation, the diagonal of the lifted process coincides with the original controlled dynamics. Existence and uniqueness for \eqref{VolterraSDE} under standard Lipschitz-continuity assumptions go back, for instance, to {\rm\citeauthor*{ito1979existence} \cite{ito1979existence}}, {\r,\citeauthor*{protter1985volterra} \cite{protter1985volterra}}, and {\rm\citeauthor*{pardoux1990stochastic} \cite{pardoux1990stochastic}}.
}
\end{remark}

\subsection{The infinite dimensional control problem}

From now on, for $\bx \in H$, we shall abuse the notation and still denote by $\bx$ its continuous representative.  Given the dynamics \eqref{liftedSDE1}, we consider the control problem
\begin{align}\label{liftedcontrolpb}
\cV_0(\bx_0) \coloneqq  \sup_{\a \in \cA^\smalltext{W}} \dbE\bigg[\int_0^T F(r, \bX_r^\a, \a_r)\mathrm{d}r + G(\bX_T^\a)\bigg],
\end{align}
for some $F : [0,T] \times H \times A \longrightarrow \dbR$ and $G : H \longrightarrow \dbR$. The following result states that this  control problem is connected to the control problem  of a Volterra-type SDE \eqref{control-pb} in the following way.
\begin{proposition}\label{prop:aux-to-or}
Let $x \in \dbR^n$. Assume $b$ and $\si$ write as in {\rm\Cref{VolterraSDE}}, and that $F(t, \bx,a ) =  f(t, x^t, a)$ and $G(\bx) = g(\bx^T)$ for all $(t, \bx, a) \in [0,T] \times H \times A$, with $f$ and $g$ as in the original problem \eqref{control-pb}. Assume furthermore that uniqueness holds for \eqref{VolterraSDE}. Then $\cV_0(\bp(x)) = V_0(x)$. 
\end{proposition}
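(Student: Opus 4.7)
The plan is to identify the two value functions by exhibiting a pathwise correspondence between Volterra solutions of \eqref{VolterraSDE} and $H$-valued solutions of the lifted SDE \eqref{liftedSDE1} which also preserves the running and terminal rewards. Since $b$ and $\si$ arise from \eqref{VolterraSDE}, they do not involve the $y$ variable in the lifted coefficients, and we have $B(r, \bx, a)(s) = b_r(s, \bx^r, a)$ and $\Si(r, \bx, a)(s) = \si_r(s, \bx^r, a)$.

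First I would fix $\a \in \cA^W$ and consider the lifted solution $\bX^\a$ issued from $\bp(x)$. By \Cref{lem:injection}, for each fixed $s \in [0,T]$ the evaluation functional $\bx \longmapsto \bx^s$ is bounded and linear on $H$; commuting it with the Bochner and Itô integrals in \eqref{liftedSDE1} yields, for all $s \in [0,T]$,
\[
\bX_t^\a(s) = x + \int_0^t b_r(s, \bX_r^\a(r), \a_r)\mathrm{d}r + \int_0^t \si_r(s, \bX_r^\a(r), \a_r)\mathrm{d}W_r, \quad \dbP\text{--a.s.}
\]
Specialising to $s = t$ shows that the diagonal $Y_t \coloneqq \bX_t^\a(t)$ satisfies the Volterra SDE \eqref{VolterraSDE}; by the uniqueness hypothesis, $Y = X^\a$, $\dbP$--a.s. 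Substituting gives $F(r, \bX_r^\a, \a_r) = f(r, X_r^\a, \a_r)$ and $G(\bX_T^\a) = g(X_T^\a)$, and taking expectations proves $\cV_0(\bp(x)) \le V_0(x)$.

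For the reverse inequality, given $\a \in \cA$ with Volterra solution $X^\a$, I would construct a candidate lift by
\[
\bX_t^\a(s) \coloneqq x + \int_0^t b_r(s, X_r^\a, \a_r)\mathrm{d}r + \int_0^t \si_r(s, X_r^\a, \a_r)\mathrm{d}W_r, \quad s \in [0,T],
\]
and verify using the Sobolev regularity of $b$ and $\si$ in their $s$-variable (as in \Cref{prop:SDE}) together with a stochastic Fubini argument that $\bX_t^\a \in H$, $\dbP$--a.s. Since $\bX_t^\a(t) = X_t^\a$ by \eqref{VolterraSDE}, this process solves the lifted SDE \eqref{liftedSDE1}, and the same substitution as above matches the rewards, giving $\cV_0(\bp(x)) \ge V_0(x)$.

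The main technical obstacle is the converse direction, where one must check that the explicit pointwise formula above defines an $H$-valued, $\dbF^W$-progressively measurable process. This boils down to commuting the Sobolev derivative in the Volterra variable $s$ with the Lebesgue and Itô integrals in $r$, and to controlling the square-integrability in $s$ of both the resulting functions and their derivatives---a step that relies essentially on the Sobolev regularity of $b$ and $\si$ in their Volterra-time argument.
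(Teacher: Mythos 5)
Your proof is correct and follows essentially the same route as the paper's: identify the diagonal of the lifted process with the Volterra solution via the uniqueness hypothesis, then observe that $F$ and $G$ only see the diagonal, so the rewards (and hence the suprema over the common set of open-loop controls) coincide. The additional care you take in the converse direction---constructing the $H$-valued lift explicitly from a given Volterra solution---is precisely what the paper delegates to \Cref{rem:weak-existence} and \Cref{rem:advantages}, so the two arguments are substantively the same.
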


\begin{proof}
By \Cref{prop:lifting-equivalence}, when the initial condition of the lifted process is $\bp(x)$ and the coefficients are of the classical form, the diagonal process of $\bX^\a$ coincides with the original Volterra dynamics $X^\a$, $\dbP$--a.s. Since $F(t,\bx,a)=f(t,\bx^t,a)$ and $G(\bx)=g(\bx^T)$, and since both problems optimise over the same set of open-loop controls, the two value functions are equal. 
\end{proof}

\begin{remark}[Choice of the set of controls]{
The choice of the set of controls is crucial for the above proposition. Indeed, if we choose to consider closed-loop controls for either \eqref{control-pb} or \eqref{liftedcontrolpb}, then we might have $\cV_0(\bp(x)) \neq V_0(x)$, as the filtration generated by $X$, the filtration generated by $\bX$ and $\dbF^W$ are different in general. However, if we consider the infinite dimensional control problem \eqref{liftedcontrolpb} as an object on its own---for example motivated by the study of moral hazard questions for time-inconsistent agents, see {\rm\cite{hernandez2024time}}---then we may either consider closed-loop or open-loop controls: both cases can be encapsulated in our dynamic programming approach.}  
\end{remark}

\section{Dynamic programming equation}\label{sec:DPE}

\subsection{The value function}

We introduce a dynamic version of the control problem \eqref{liftedcontrolpb}. Denoting by $\bX^{t, \bx, \a}$ the solution of \eqref{liftedSDE1} such that $\bX_t^{t, \bx, \a} = \bx$, we define:
\begin{align}\label{value-function}
\cV(t,\bx) \coloneqq  \sup_{\a \in \cA} \dbE\bigg[\int_t^T F(r, \bX_r^{t,\bx,\a}, \a_r)\mathrm{d}r + G(\bX_T^{t,\bx,\a})\bigg], \; \mbox{for all $(t,\bx) \in [0,T] \times H$}.
\end{align}

\begin{proposition}[Regularity of the value function]\label{prop:regularity}
Assume that

\medskip
$(i)$ $F$ is uniformly continuous in $(t, \bx) \in [0,T] \times B_H(0,R)$, uniformly in $a \in A$, for all $R \ge 0$, where $B_H(0,R)$ denotes the ball of radius $R$ and centre $0$ for the metric $\lVert \cdot \rVert_H;$

\medskip
$(ii)$ $G$ is uniformly continuous in $\bx \in B_H(0,R)$ for all $R \ge 0;$

\medskip
$(iii)$ $F$ and $G$ have polynomial growth in $\bx$, uniformly in the other variables.

\medskip
Then the value function $\cV$ is uniformly continuous on all the sets $[0,T] \times B_H(0,R)$, $R \ge 0$, and has polynomial growth in $\bx$ uniformly in $t$.
\end{proposition}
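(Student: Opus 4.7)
The plan is to reduce the regularity of $\mathcal{V}$ to two ingredients: moment and stability estimates for the lifted SDE in $H$, and the assumed regularity/growth of $F$ and $G$. First, I would establish the uniform moment bound
\[
\mathbb{E}\bigg[\sup_{r \in [t,T]} \lVert \bX_r^{t,\bx,\a} \rVert_H^{2p}\bigg] \le C_p\bigl(1 + \lVert \bx \rVert_H^{2p}\bigr),\; \text{for all } p \ge 1,
\]
uniformly in $\a \in \cA^W$, by applying the infinite-dimensional Burkholder--Davis--Gundy inequality and Gr\"onwall's lemma to \eqref{liftedSDE1}, using the linear growth of $B$ and $\Si$ in $\bx$ already shown in the proof of \Cref{prop:SDE}. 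The polynomial growth of $\cV$ in $\bx$ then follows immediately from assumption $(iii)$ by taking $p$ large enough.

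Next, I would prove a stability estimate in $H$. Using the Lipschitz-continuity of $B$ and $\Si$ in $\bx$ (again from the proof of \Cref{prop:SDE}), an analogous BDG-Gr\"onwall argument yields, for any $\a \in \cA^W$ and $t \in [0,T]$,
\[
\mathbb{E}\bigg[\sup_{r \in [t,T]} \lVert \bX_r^{t,\bx,\a} - \bX_r^{t,\bx',\a} \rVert_H^{2}\bigg] \le C \lVert \bx - \bx' \rVert_H^{2}.
\]
To deal with two different initial times $t < t'$, I would first estimate the increment on the interval $[t,t']$, namely
\[
\mathbb{E}\bigl[\lVert \bX_{t'}^{t,\bx,\a} - \bx \rVert_H^{2}\bigr] \le C\bigl(1+\lVert \bx \rVert_H^2\bigr)(t'-t),
\]
using again the linear growth of $B$ and $\Si$ together with BDG and It\^o's isometry in $H$. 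Composing this with the stability estimate from time $t'$ onwards (with initial data $\bX_{t'}^{t,\bx,\a}$ versus $\bx'$) yields a joint stability bound of the form $C\bigl((1+\lVert \bx \rVert_H^2)(t'-t) + \lVert \bx - \bx' \rVert_H^2\bigr)$.

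Now for the uniform continuity of $\cV$: assuming WLOG $t \le t'$, using the elementary inequality $|\sup_\a x_\a - \sup_\a y_\a| \le \sup_\a|x_\a - y_\a|$ after extending or restricting controls across $[t,t']$, I would bound $|\cV(t,\bx)-\cV(t',\bx')|$ by
\[
\sup_{\a \in \cA^\smalltext{W}} \mathbb{E}\bigg[\int_t^{t'} |F(r, \bX_r^{t,\bx,\a}, \a_r)|\mathrm{d}r + \int_{t'}^T |F(r, \bX_r^{t,\bx,\a}, \a_r) - F(r, \bX_r^{t',\bx',\a}, \a_r)|\mathrm{d}r + |G(\bX_T^{t,\bx,\a}) - G(\bX_T^{t',\bx',\a})|\bigg].
\]
The first term is controlled by $C(1+\lVert \bx \rVert_H^q)(t'-t)$ via the polynomial growth of $F$ and the moment bound. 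For the remaining two terms, I would introduce a truncation parameter $R' \ge R$ and split the expectation according to the event $\{\sup_{r \in [t,T]}\lVert \bX_r^{t,\bx,\a} \rVert_H \vee \sup_{r\in[t',T]}\lVert \bX_r^{t',\bx',\a}\rVert_H \le R'\}$. On this event, assumptions $(i)$--$(ii)$ provide a modulus of continuity that can be applied to the stability estimate above. On the complement, Markov's inequality combined with the moment bound and the polynomial growth of $F,G$ gives a contribution of order $R'^{-\eta}$ for some $\eta > 0$. Sending first $\lVert \bx-\bx'\rVert_H + |t-t'| \to 0$, then $R' \to \infty$, yields the desired uniform continuity on $[0,T] \times B_H(0,R)$.

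The main technical obstacle will be the truncation step: since $F$ and $G$ are only uniformly continuous on bounded subsets of $H$, the modulus of continuity depends on $R'$, and one must carefully calibrate $R'$ against $\lVert \bx-\bx'\rVert_H + |t-t'|$ so that both the tail term and the modulus term vanish simultaneously. This calibration is routine when $F$ and $G$ have polynomial growth (as assumed), but requires keeping explicit track of the growth exponents through the BDG-Gr\"onwall estimates.
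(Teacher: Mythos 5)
Your proposal is correct in outline, but it takes a genuinely different route from the paper: the paper's entire proof is a one-line citation of a general regularity result for value functions of controlled SDEs on Hilbert spaces (\cite[Proposition 3.61]{fabbri2017stochastic}), whereas you reconstruct the underlying argument from scratch. What you write is essentially the standard proof that sits behind such a citation: uniform-in-control moment bounds and stability estimates for \eqref{liftedSDE1} via BDG and Gr\"onwall (which do hold here because the proof of \Cref{prop:SDE} establishes linear growth and Lipschitz continuity of $B$ and $\Si$ in $\bx$ uniformly in $(t,a)$), the elementary inequality $\lvert\sup_\a x_\a - \sup_\a y_\a\rvert \le \sup_\a\lvert x_\a - y_\a\rvert$, and a truncation at radius $R'$ calibrated against $\lVert\bx-\bx'\rVert_H + \lvert t-t'\rvert$ to reconcile the local modulus of continuity of $F,G$ with their polynomial growth. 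The trade-off is the usual one: the citation is short but hides the verification of the hypotheses of the quoted proposition, while your version is longer but makes explicit exactly which structural properties of the lifted coefficients are consumed and where, and in particular makes transparent that the constants (hence the modulus of continuity of $\cV$ on $[0,T]\times B_H(0,R)$) depend only on $R$, $T$ and the Lipschitz/growth constants. The only points worth tightening in a written version are (a) the handling of the control set when comparing the problems started at $t$ and $t'>t$ (the same $\dbF^W$-progressively measurable process can be used for both, so the sup-difference inequality is legitimate, but this should be said), and (b) the observation that the moment and stability estimates must be uniform over $\a\in\cA^W$, which is exactly why the uniformity in $a$ of the Lipschitz and growth bounds from \Cref{prop:SDE} is needed.
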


\begin{proof}
This is a direct application of \cite[Proposition 3.61]{fabbri2017stochastic}.
\end{proof}

Our lifted control problem \eqref{liftedcontrolpb} falls under the scope of Markovian control problems on Hilbert spaces, and we may therefore naturally formulate the following dynamic programming principle.
\begin{proposition}[Dynamic programming principle]
Under the assumptions of {\rm\Cref{prop:regularity}}, we have
\begin{align}\label{DPP}
\cV(t, \bx) = \sup_{\a \in \cA} \dbE\bigg[\int_t^\th F\big(r, \bX_r^{t, \bx, \a}, \a_r\big)\mathrm{d}r + \cV(\th, \bX_{\th}^{t, \bx, \a})\bigg],\; \forall (t,\bx) \in [0,T] \times H,\; \text{\rm and}\; \th \in \cT_{t,T},
\end{align}
where $\cT_{t,T}$ denotes the set of $[t,T]$-valued $\dbF$--stopping times.
\end{proposition}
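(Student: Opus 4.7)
The plan is to follow the classical proof of the dynamic programming principle for Markovian stochastic control, adapted to the Hilbert-space setting. Two preliminary ingredients enter the argument. First, by pathwise uniqueness from \Cref{prop:SDE}, the lifted process enjoys a flow property: for any $t\le s\le r\le T$ and $\a\in\cA$,
\[
\bX_r^{t,\bx,\a} = \bX_r^{s,\bX_s^{t,\bx,\a},\a},\; \dbP\text{--a.s.},
\]
from which the Markov property of $\bX^\a$ follows, since $B$ and $\Si$ depend on the path only through its current value. Second, the uniform continuity of $\cV$ on the balls $B_H(0,R)$, established in \Cref{prop:regularity}, is what allows one to handle the measurability issues in the nontrivial direction.

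The inequality $\cV(t,\bx)\le\sup_{\a}\dbE[\cdots]$ is the easy one. I would fix an arbitrary $\a\in\cA$ and $\th\in\cT_{t,T}$, split the cost functional at $\th$, and apply the tower property together with the flow and Markov properties: the conditional expectation of the tail cost given $\cF_\th$ equals the cost functional for the shifted control started at $(\th,\bX_\th^{t,\bx,\a})$, hence is bounded above by $\cV(\th,\bX_\th^{t,\bx,\a})$ by definition of the value function. Taking the supremum over $\a$ yields the inequality.

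The reverse inequality relies on the classical pasting argument. Given $\eps>0$, for each $(s,\by)\in[0,T]\times H$ I would select an $\eps$-optimal control $\a^{\eps,s,\by}\in\cA$, and concatenate it with an arbitrary $\a\in\cA$ at time $\th$. The resulting strategy is admissible, and its expected reward dominates
\[
\dbE\bigg[\int_t^\th F(r,\bX_r^{t,\bx,\a},\a_r)\mathrm{d}r + \cV(\th,\bX_\th^{t,\bx,\a})\bigg] - \eps,
\]
by $\eps$-optimality applied pathwise. Letting $\eps\to 0$ and then taking the supremum over $\a$ yields the claim.

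The main obstacle lies in the measurable selection underlying the pasting step: choosing $\{\a^{\eps,s,\by}\}$ in a jointly measurable way on the uncountable set $[0,T]\times H$ is delicate in infinite dimensions, as $H$ is not locally compact. The standard remedy uses the separability of $[0,T]\times H$ together with the uniform continuity of $\cV$ on balls to partition $[0,T]\times B_H(0,R)$ into countably many small Borel cells on which $\cV$ varies by at most $\eps$, and to apply the $\eps$-optimal control attached to a representative point of each cell. This construction is carried out for controlled SDEs on separable Hilbert spaces in \cite[Chapter 3]{fabbri2017stochastic}, into whose framework our setting fits. Combining the two inequalities yields \eqref{DPP}.
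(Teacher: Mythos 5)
Your outline is correct and ultimately rests on the same foundation as the paper, which proves this proposition simply by invoking \cite[Proposition 2.24]{fabbri2017stochastic}; you sketch the standard two-inequality argument (tower/flow property for the easy direction, $\eps$-optimal controls with a pasting and measurable-selection step for the converse) and then correctly defer the genuinely delicate step — the measurable selection on the non-locally-compact space $[0,T]\times H$ — to the very same reference. Since that is exactly where the paper places the entire burden, the two proofs are essentially the same.
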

\begin{proof} 
This is a direct application of \cite[Proposition 2.24]{fabbri2017stochastic}.
\end{proof}

\subsection{Viscosity solutions}

For any smooth $\f : [0,T] \times H \longmapsto  \dbR$, we denote by $\pa_t \f$ the derivative of $\f$ with respect to $t \in [0,T]$, and by $D_\bx \f$ and $D_{\bx \bx}^2 \f$ the first- and second-order Fréchet derivatives of $\f$ with respect to $\bx \in H$. For all $(t, \bx) \in [0,T] \times H$, by Riesz's representation theorem, $D_\bx \f(t, \bx)$ can be identified to an element of $H$, and $D_{\bx \bx}^2 \f(t,\bx)$ to an endomorphism of $H$.  

\medskip
The purpose of this section is to show that the value function $\cV$ of \eqref{liftedcontrolpb} can be characterised as the unique viscosity solution of the dynamic programming equation
\begin{align}\label{DPE}
-\pa_t u(t,\bx) -  \sup_{a \in A}\bigg\{ \big\langle D_\bx u(t, \bx), b_t(\cdot, \tilde \bx^t, \bx^\cdot, a) \big\rangle_H + \frac 1 2 \big\langle \si_t(\cdot, \tilde \bx^t, \bx^\cdot, a), D_{\bx \bx}^2 u(t,\bx)\si(\cdot, \tilde \bx^t, \bx^\cdot, a)\big\rangle_H + F(t, \bx, a)  \bigg\} = 0, 
\end{align}
with terminal condition $u|_{t=T} = G$.

\begin{definition}[Viscosity solutions]
Let $u : [0,T] \times H \longrightarrow \dbR$ be locally bounded.

\medskip
{$(i)$} $u$ is said to be a viscosity super-solution of \eqref{DPE} if $u(T, \cdot) \ge G$ and, for all $\f \in C^{1,2}([0,T] \times H)$ such that $u-\f$ has a local minimum in $(t,\bx)$, we have
\[
-\pa_t \f(t,\bx) -  \sup_{a \in A}\bigg\{ \big\langle D_\bx \f(t, \bx), b_t(\cdot, \tilde \bx^t, \bx^\cdot, a) \big\rangle_H \nonumber + \frac 1 2 \big\langle \si_t(\cdot, \tilde \bx^t, \bx^\cdot, a), D_{\bx \bx}^2 \f(t,\bx)\si(\cdot, \tilde \bx^t, \bx^\cdot, a)\big\rangle_H + F(t, \bx, a) \bigg\} \ge 0.
\]

$(ii)$ $u$ is said to be a viscosity sub-solution of \eqref{DPE} if $u(T, \cdot) \le G$ and, for all $\f \in C^{1,2}([0,T] \times H)$ such that $u-\f$ has a local maximum in $(t,\bx)$, we have
\[
-\pa_t \f(t,\bx) -  \sup_{a \in A}\bigg\{ \big\langle D_\bx \f(t, \bx), b_t(\cdot, \tilde \bx^t, \bx^\cdot, a) \big\rangle_H + \frac 1 2 \big\langle \si_t(\cdot, \tilde \bx^t, \bx^\cdot, a), D_{\bx \bx}^2 \f(t,\bx)\si(\cdot, \tilde \bx^t, \bx^\cdot, a)\big\rangle_H + F(t, \bx, a) \bigg\} \le 0.
\]

$(iii)$ $u$ is said to be a viscosity solution of \eqref{DPE} if it is both a viscosity super-solution and viscosity sub-solution of \eqref{DPE}. 
\end{definition}

Then, applying standard viscosity theory on Hilbert space (see e.g.\ \cite[Theorem 3.67]{fabbri2017stochastic}), we may formulate the following characterisation of $\cV$.
\begin{proposition}\label{prop:existence-uniqueness}
Assume that

\medskip
$(i)$ $t \longmapsto  B_t(\bx, a)$ is continuous, uniformly in $(\bx, a) \in B_H(0,R) \times A$ for all $R > 0;$

\medskip
$(ii)$ $\si$ has linear growth in $\tilde x^t$ and $\tilde x^s$, uniformly in the other variables. 

\medskip
Then $\cV$ is the unique continuous viscosity solution of \eqref{DPE} with polynomial growth. 
\end{proposition}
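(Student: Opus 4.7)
The strategy is to verify the hypotheses of \cite[Theorem 3.67]{fabbri2017stochastic}, which gives both existence and uniqueness of a viscosity solution to the dynamic programming equation on the Hilbert space $H$ under a collection of structural, continuity and growth conditions on the coefficients and the value function. The framework here is simpler than the general one considered in the reference, since our lifted equation \eqref{liftedSDE1} does not contain an unbounded drift operator of semigroup type; in particular, the B-continuity machinery degenerates into ordinary continuity, and the viscosity notion written above is the one that is actually tested.

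First I would establish existence. Continuity and polynomial growth of $\cV$ are already given by \Cref{prop:regularity}, and the dynamic programming principle \eqref{DPP} has been recorded. From these two ingredients, the sub- and super-solution properties follow in a now-classical way: fix a test function $\f \in C^{1,2}([0,T]\times H)$ touching $\cV$ at $(t,\bx)$, pick a constant control $\a\equiv a\in A$, apply Itô's formula on Hilbert space to $\f(\cdot, \bX^{t,\bx,a}_{\cdot})$ up to a suitable stopping time (see e.g.\ \cite[Chapter 1]{fabbri2017stochastic}), take expectations and divide by the time increment before sending it to zero. The inner supremum pops out by the usual measurable selection/optimisation argument. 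Assumption $(i)$---continuity of $t\longmapsto B_t(\bx,a)$ uniformly on bounded sets---together with \Cref{lem:injection} and assumption~$(ii)$, ensures the integrand appearing in the Hamiltonian is continuous in $(t,\bx)$, which is what makes the sub-/super-solution inequalities pass to the limit.

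For uniqueness, the argument is a comparison principle between a bounded-from-above sub-solution and a bounded-from-below super-solution with polynomial growth, using the doubling-of-variables technique adapted to Hilbert space as in \cite{lions1988viscosity,lions1989viscosity,lions1989viscosity2} and packaged in \cite[Theorem 3.50]{fabbri2017stochastic}. One perturbs with a penalisation of the form $\tfrac{1}{2\e}\|\bx-\by\|_H^2$ plus a radial corrector to localise in the non-locally compact space $H$; the maximum-principle-for-semijets on Hilbert space then produces second-order subjet/superjet elements whose Hamiltonian difference must be controlled. Assumption $(i)$ yields the necessary continuity of $B$ on bounded sets; the Lipschitz-continuity of $B$ in $\bx$ uniformly in $(t,a)$ (already proved inside \Cref{prop:SDE}) gives the standard $O(\|\bx-\by\|_H^2/\e)$ drift estimate; assumption $(ii)$, combined with the Lipschitz bounds on $\Si$, produces the matching estimate on the diffusion term using that the penalisation's Hessian is $\e^{-1}(\mathrm{Id},-\mathrm{Id};-\mathrm{Id},\mathrm{Id})$ and that $\|\Si(t,\bx,a)-\Si(t,\by,a)\|_{\cL(\dbR^d,H)}^2 \le C\|\bx-\by\|_H^2$. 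The polynomial growth assumption on $F$, $G$ and $\cV$ allows the radial corrector to absorb the growth at infinity. Putting these estimates together and letting first $\e\downarrow 0$ and then the corrector vanish produces the comparison inequality, hence uniqueness.

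The main obstacle I expect is the Hilbert-space second-order maximum principle step in the uniqueness argument. Unlike in finite dimension, one cannot appeal directly to Ishii's lemma, and one needs an Ekeland-type variational argument together with a careful choice of radial corrector that is both smooth, coercive and has bounded second Fréchet derivative on bounded sets. Matching the quadratic growth of the penalisation against the linear-in-$\bx$ growth of $\Si$ guaranteed by assumption~$(ii)$ is precisely what makes the doubling-of-variables closure work; this is the single place where assumption~$(ii)$ (linear growth of $\si$ in both $\tilde x^t$ and $\tilde x^s$) is used in an essential way, and it is what we exploit to reduce to the exact setting of \cite[Theorem 3.67]{fabbri2017stochastic}.
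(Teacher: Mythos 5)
You correctly identify the overall strategy---verifying the hypotheses of \cite[Theorem 3.67]{fabbri2017stochastic}---but your proposal then never actually verifies the one hypothesis that is non-trivial in this setting, and instead re-narrates the proof of the cited theorem itself (It\^o's formula plus DPP for existence, doubling of variables plus a radial corrector for comparison), which is already packaged in the reference and does not need to be redone. The paper's entire proof consists of checking \cite[Assumption (3.155)]{fabbri2017stochastic}: for an orthonormal basis $(e_k)_{k\in\mathbb N^\star}$ of $H$ with tail projections $\cQ_N$, one must show
\[
\lim_{N\to\infty}\sup_{a\in A}\Big\{\mathrm{Tr}\big[\Si_t(\bx,a)\Si_t(\bx,a)^\top\cQ_N\big]\Big\}=0,\qquad \forall (t,\bx)\in[0,T]\times H.
\]
This is a genuinely infinite-dimensional structural condition (a uniform-in-the-control vanishing-tail-trace, i.e.\ nuclearity, requirement on the diffusion operator) with no finite-dimensional analogue, and it is precisely what licenses the comparison principle of \cite[Theorem 3.67]{fabbri2017stochastic} on the non-locally-compact space $H$. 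Your sketch omits it entirely, so as written it would not constitute a proof.

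Relatedly, you misattribute the role of assumption $(ii)$. In the paper, the linear growth of $\si$ in $\tilde x^t$ and $\tilde x^s$ is used to bound the Fourier coefficients $|\langle \si_t(\cdot,\tilde x^t,\tilde x^\cdot,a),e_k\rangle_H|$ by $C(1+|\tilde x^t|)|\langle 1,e_k\rangle_H|+|\langle\bx,e_k\rangle_H|$, i.e.\ by quantities independent of $a$; since both the constant function $1$ and $\bx$ belong to $H$, the tail sums $\sum_{k>N}|\langle 1,e_k\rangle_H|^2$ and $\sum_{k>N}|\langle\bx,e_k\rangle_H|^2$ vanish, giving the uniform limit above. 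The key structural fact being exploited is that $\Si_t(\bx,a)$ is the rank-$d$ operator $\by\mapsto\langle\si_t(\cdot,\tilde x^t,\tilde x^\cdot,a),\by\rangle_H\,\si_t(\cdot,\tilde x^t,\tilde x^\cdot,a)$, so its trace against $\cQ_N$ is an explicit tail sum. Your proposal instead uses assumption $(ii)$ for a Lipschitz/growth estimate inside a doubling-of-variables argument; that is not where the assumption is needed, and without the trace condition the reduction to \cite[Theorem 3.67]{fabbri2017stochastic} that you announce at the end cannot be carried out.
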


\begin{proof}
Let $(e_k)_{k \in \mathbb{N}^\smalltext{\star}}$ be an orthonormal basis of $H$. We essentially have to check \cite[Assumption (3.155)]{fabbri2017stochastic}, that is
\begin{equation}\label{3.155}
\underset{N \to \infty}{\lim} \sup_{a \in A}\Big\{ {\rm Tr}\big[\Si_t(\bx, a)\Si_t(\bx,a)^\top \cQ_N\big]\Big\} = 0, \; \forall(t, \bx) \in [0,T] \times H,
\end{equation}
where $\cQ_N$ is the orthonormal projection onto the family $(e_k)_{k \in\mathbb{N}^\smalltext{\star}\setminus\{1,\dots,N\}}$. Note that $\Si_t(\bx, a)\Si_t(\bx,a)^\top$ corresponds to the endomorphism of $H$
\[
 \by \longmapsto  \langle \si_t(\cdot, \tilde x^t, \tilde x^\cdot, a), \by \rangle_H \si_t(\cdot, \tilde x^t, \tilde x^\cdot, a). 
 \]
Then, denoting by $\si^k_t(\bx, a)$ the projection of $\Si_t(\bx, a)$ onto $e_k$, for $k\in\mathbb{N}^\star$, we have
\[
 {\rm Tr}\big[\Si_t(\bx, a)\Si_t(\bx,a)^\top \cQ_N\big] = \sum_{k = N+1}^\infty | \si^k_t(\bx, a) |^2. 
\]
However, we have 
\[
| \si^k_t(\bx, a) | = \lvert \langle \si_t(\cdot, \tilde x^t, \tilde x^\cdot, a), e_k \rangle_H \rvert \le C(1 + | \tilde x^t |) | \langle 1, e_k \rangle_H | + | \langle \bx, e_k \rangle_H |,
\]
and therefore
\[
 {\rm Tr}\big[\Si_t(\bx, a)\Si_t(\bx,a)^\top \cQ_N\big] \le 2(1+| \tilde x^t |)^2 \sum_{k = N+1}^\infty | \langle 1, e_k \rangle_H |^2 + 2\sum_{k = N+1}^\infty | \langle \bx, e_k \rangle_H |^2.  
 \]
Since both $1$ (as a constant mapping) and $\bx$ belong to $H$, the two sums on the right-hand side go to $0$ as $N \longrightarrow \infty$. Since this term is independent from $a$, we deduce that \eqref{3.155} holds true, and we may therefore conclude by applying \cite[Theorem 3.67]{fabbri2017stochastic}.
\end{proof}

\subsection{The case of uncontrolled volatility}\label{sect:BSDE}

In this section, we assume that $\si$ does not depend on $a$, and that there exists a bounded $\th : [0,T] \times H \times A \longrightarrow \dbR$ such that $B_t(\bx, a) = \G_t(\bx) + \Si_t(\bx)\th_t(\bx,a)$. We show that the value function of the infinite dimensional control problem can be expressed as the solution of a backward SDE. To this end, we reformulate the lifted control problem in weak formulation. Let $\bX$ be the unique strong solution of the $H$-valued SDE
\[
 \bX_t = \bx_0 + \int_0^t \G_r(\bX_r)\mathrm{d}r + \int_0^t \Si_r(\bX_r)\mathrm{d}W_r, \; \mbox{$\dbP$--a.s.} 
 \]
Let $\a \in \cA$. By the existence of the function $\th$ introduced above, it follows from Girsanov's theorem that there exists a probability measure $\dbP^\a$ equivalent to $\dbP$ such that
\[
 W_t^\a \coloneqq  W_t - \int_0^t \th_r(\bX_r, \a_r)\mathrm{d}r,
 \]
is a $\dbP^\a$--Brownian motion. Therefore
\[
 \bX_t = \bx_0 + \int_0^t B_r(\bX_r, \a_r)dr + \int_0^t \Si_r(\bX_r)dW_r^\a, \;\mbox{$\dbP^\a$--a.s.}, 
 \]
and we may reformulate the control problem in the following way
\begin{align}\label{weak-formulation}
\cV_0^w(\bx_0) = \sup_{\a \in \cA} \dbE^{\dbP^\smalltext{\a}}\bigg[ \int_0^T F_r(\bX_r, \a_r)\mathrm{d}r + G(\bX_T) \bigg].
\end{align}
We can easily see that an analogue of \Cref{prop:aux-to-or} holds true here; indeed, if $X$ writes 
\[
X_t = x_0 + \int_0^t \g_r(t, X_r)\mathrm{d}r + \int_0^t \si_r(t, X_r)\mathrm{d}W_r, \; \mbox{$\dbP$--a.s.,}
\]
and 
\[
V_0(x_0) \coloneqq  \sup_{\a \in \cA} \dbE^{\dbP^\smalltext{\a}}\bigg[\int_0^T f_r(X_r, \a_r)\mathrm{d}r + g(X_T)\bigg],
\]
then $V_0(x_0) = \cV_0^w(\bp(x_0))$.  Let now $\cH : [0,T] \times H \times \dbR \longrightarrow \dbR$ be the Hamiltonian defined by 
\[
\cH_t(\bx, z) = \sup_{a \in A}\big\{ z\th_t(\bx,a) + F_t(\bx, a)\big\}.
\] 
\begin{proposition}\label{prop:BSDE}
Assume that

\medskip
 $(i)$ $\cH$ is Lipschitz-continuous in $z;$
 
 \medskip
 $(ii)$ $F$ has linear growth in $\bx \in H$, uniformly in the other variables$;$
 
 \medskip
 $(iii)$ $\G$ and $\Si$ are Lipschitz-continuous in $\bx \in H$, uniformly in $t \in [0,T]$.
 
 \medskip
 Then $\cV_0^w(\bx_0) = Y_0$, where $(Y, Z)$ is the unique solution of the backward {\rm SDE}
\[
Y_t = G(\bX_T) + \int_t^T \cH_r(\bX_r, Z_r)\mathrm{d}r - \int_t^T Z_r \mathrm{d}W_r,\; t\in[0,T],\; \mathbb{P}\text{\rm--a.s.} 
\]
Furthermore, if there exists a measurable mapping $\psi : [0,T] \times H \times \dbR \longrightarrow A$ such that 
\begin{equation}\label{optimal-equality}
 \cH(t,\bx, z) = z\th_t\big(\bx, \psi_t(\bx, z)\big) + F_t\big(\bx, \psi_t(\bx, z)\big),
 \end{equation}
then $\a_t^* \coloneqq  \psi_t(\bX_t, Z_t)$ is an optimal control for \eqref{weak-formulation}. 
\end{proposition}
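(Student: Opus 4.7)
The plan is to follow the classical scheme relating weakly-formulated Markovian control problems to Lipschitz BSDEs: first establish well-posedness of the BSDE, then derive a one-sided bound via Girsanov and the definition of the Hamiltonian, and finally upgrade it to an equality using the selector $\psi$.

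First, I would check well-posedness of the BSDE in the Pardoux--Peng sense. Under assumption $(iii)$, the forward $H$-valued SDE for $\bX$ admits a unique strong solution with finite moments of every order, by standard Hilbert-space SDE estimates (see \cite[Theorem 3.3]{gawarecki2011stochastic}). Combined with the linear growth of $F$ in $\bx$ (assumption $(ii)$), the uniform boundedness of $\th$, and a polynomial growth hypothesis on $G$ (inherited from the standing continuity and growth setup of \Cref{prop:regularity}), one obtains $G(\bX_T) \in \dbL^2(\dbP)$ and $\cH_t(\bX_t, 0) = \sup_{a \in A} F_t(\bX_t, a) \in \dbL^2(\mathrm{d}t \otimes \mathrm{d}\dbP)$. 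The Lipschitz continuity of $\cH$ in $z$ (assumption $(i)$) then places us under the Pardoux--Peng hypotheses and yields a unique pair $(Y,Z)$ in the usual square-integrable spaces.

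Next, I would derive the bound $Y_0 \geq \cV_0$. Fix an arbitrary $\a \in \cA$. The uniform boundedness of $\th$ makes the Girsanov density $\mathrm{d}\dbP^\a/\mathrm{d}\dbP$ a true martingale with finite moments of every order. Rewriting the BSDE under $\dbP^\a$,
\[
Y_t = G(\bX_T) + \int_t^T \bigl[\cH_r(\bX_r, Z_r) - Z_r \th_r(\bX_r, \a_r)\bigr]\mathrm{d}r - \int_t^T Z_r \mathrm{d}W^\a_r,
\]
and combining the pointwise inequality $\cH_r(\bX_r, Z_r) \geq Z_r \th_r(\bX_r, \a_r) + F_r(\bX_r, \a_r)$ with the fact that $\int_0^\cdot Z_r \mathrm{d}W^\a_r$ is a genuine $\dbP^\a$-martingale (since $Z \in \mathbb{H}^2(\dbP)$ and the Girsanov density lies in every $L^q(\dbP)$), taking $\dbE^{\dbP^\a}$-expectations yields
\[
Y_0 \geq \dbE^{\dbP^{\a}}\bigg[\int_0^T F_r(\bX_r, \a_r)\mathrm{d}r + G(\bX_T)\bigg].
\]
Passing to the supremum over $\a$ and invoking the weak-formulation analogue of \Cref{prop:aux-to-or} to identify $\cV_0^w$ with $\cV_0$ then gives $Y_0 \geq \cV_0$.

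Finally, when a measurable selector $\psi$ exists, $\a^*_t \coloneqq \psi_t(\bX_t, Z_t)$ is $\dbF^W$-progressive since $(\bX, Z)$ is, and hence belongs to $\cA$. Relation \eqref{optimal-equality} turns the pointwise inequality of the previous step into a pointwise equality, so that $Y_0 = \dbE^{\dbP^{\a^*}}[\int_0^T F_r(\bX_r, \a^*_r)\mathrm{d}r + G(\bX_T)] \leq \cV_0$, yielding both $Y_0 = \cV_0$ and the optimality of $\a^*$. The main technical point is to guarantee that $\int Z \mathrm{d}W^\a$ is a true $\dbP^\a$-martingale rather than merely a local one---the standard bottleneck in weak-formulation arguments---which is handled here by the uniform boundedness of $\th$ together with $L^p$-bounds on both the Girsanov density and $Z$.
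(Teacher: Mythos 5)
Your first inequality $Y_0 \ge \cV_0$ is fine and essentially matches the paper's argument (the paper phrases it via the comparison principle applied to the linear BSDE with driver $F_r(\bX_r,\a_r)+Z_r\th_r(\bX_r,\a_r)$, whose initial value is exactly $\dbE^{\dbP^{\a}}[\int_0^T F\,\mathrm{d}r+G]$ by the same Girsanov identification you use; the two routes are interchangeable). The well-posedness discussion is also consistent with the paper's.

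The gap is in the reverse inequality. The proposition asserts $\cV_0=Y_0$ under hypotheses $(i)$--$(iii)$ alone; the existence of the exact measurable maximiser $\psi$ in \eqref{optimal-equality} is only an \emph{additional} hypothesis used to exhibit an optimal control. Your argument establishes $Y_0\le\cV_0$ only in the case where such a $\psi$ exists, so as written it does not prove the first assertion of the proposition. To close this, you need the standard $\e$-relaxation: by measurable selection there always exists, for each $\e>0$, a measurable $\psi^\e$ with $\cH_t(\bx,z)\le F_t(\bx,\psi^\e_t(\bx,z))+z\th_t(\bx,\psi^\e_t(\bx,z))+\e$; setting $\a^\e_t\coloneqq\psi^\e_t(\bX_t,Z_t)$ and running your Girsanov computation on $Y-Y^{\a^\e}$ gives $Y_0\le Y_0^{\a^\e}+T\e\le\cV_0+T\e$, and letting $\e\to0$ yields $Y_0\le\cV_0$ unconditionally. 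This is precisely what the paper does; the exact-selector case then follows as the special case $\e=0$.
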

\begin{proof}
For $\a \in \cA$, let $(Y^\a, Z^\a)$ denote the solution of the backward SDE:
\[
 Y_t^\a = G(\bX_T) + \int_t^T \big(F_r(\bX_r, \a_r) + Z_r^\a \th_r(\bX_r, \a_r)\big)\mathrm{d}r - \int_t^T Z_r^\a \mathrm{d}W_r.  
 \]
By the Lipschitz and linear growth assumptions made on $\cH$, $\G$, $\Si$ and $F$ and the boundedness of $\th$, there exists a unique solution $(Y^\a, Z^\a)$ to the above equation (see e.g.\ \citeauthor*{pardoux1990adapted} \cite{pardoux1990adapted}, \citeauthor*{el1997backward} \cite{el1997backward} or \citeauthor*{zhang2017backward} \cite{zhang2017backward}). As the equations solved by $(Y, Z)$ and $(Y^\a, Z^\a)$ satisfy the usual Lipschitz and measurability conditions, and by definition of $\cH$, the comparison principle for backward SDEs ensures that $Y_0^\a \le Y_0$. Since $\a$ is arbitrary, this shows that $\cV_0^w(\bx_0) \le Y_0$. 

\medskip
Fix now $\e > 0$. By measurable selection, there exists a measurable mapping $\psi^\e : [0,T] \times H \times \dbR \longrightarrow \dbR$ such that
\[
 \cH_t(\bx, z) \le F_t(\bx, \psi^\e_t(\bx, z)) + z\th_t(\bx, \psi^\e_t(\bx, z)) + \e. 
\]
Introducing $\a_t^\e \coloneqq  \psi^\e_t(\bX_t, Z_t)$, we have
\begin{align*}
Y_t - Y_t^\e &= \int_t^T \big(\cH_r(\bX_r, Z_r) - F_r(\bX_r, \a_r^\e) - Z_r^{\a^\smalltext{\e}} \th_r(\bX_r, \a_r^\e)\big)\mathrm{d}r - \int_t^T \big(Z_r - Z_r^{\a^\smalltext{\e}})\mathrm{d}W_r \\
&= \int_t^T \Big(\big(\cH_r(\bX_r, Z_r) - F_r(\bX_r, \a_r^\e) - Z_r \th_r(\bX_r, \a_r^\e)\big) + (Z_r - Z_r^{\a^\smalltext{\e}}) \th_r(\bX_r, \a_r^\e) \Big)\mathrm{d}r - \int_t^T \big(Z_r - Z_r^{\a^\smalltext{\e}})\mathrm{d}W_r \\
&= \int_t^T\big(\cH_r(\bX_r, Z_r) - F_r(\bX_r, \a_r^\e) - Z_r \th_r(\bX_r, \a_r^\e) \big)\mathrm{d}r - \int_t^T \big(Z_r - Z_r^{\a^\smalltext{\e}})\mathrm{d}W_r^{\a^\smalltext{\e}} \\
&\le \e(T-t) - \int_t^T \big(Z_r - Z_r^{\a^\smalltext{\e}})\mathrm{d}W_r^{\a^\smalltext{\e}}.
\end{align*}
Thus, we have $Y_0 \le Y_0^{\a^\smalltext{\e}} + T\e$. By arbitrariness of $\e$, this implies that $Y_0 \le \cV_0^w(\bx_0)$, and therefore the desired equality holds true. In particular, when \eqref{optimal-equality} holds, we have $\cV_0^w(\bx_0) = Y_0 = Y_0^{\a^\smalltext{\star}}$, which means that $\a^\star$ is an optimal control. 
\end{proof}

\begin{remark}{
Let us discuss what the assumption $B_t(\bx, a) =\G_t(\bx) + \Si_t(\bx)\th_t(\bx,a)$ means in the context of the control of stochastic Volterra integral equations. If one wants to be able to apply Girsanov's theorem, the real-valued mapping $\th$ must depend only on $\bx$ and on the \enquote{regular time} $t$, and not on the \enquote{Volterra time} $s$. This means that the dependence on $s$ must be the same in $b$ and $\si$. This is for instance the case for the following dynamics, considered by {\rm\citeauthor*{di2023lifting} \cite{di2023lifting}}
\begin{equation}\label{Volterra-BSDE}
 X_t = x + \int_0^t K(t-r)\Big( \big(b^1_r(X_r) + \si_r(X_r)b^2_r(X_r, \a_r)\big)\mathrm{d}r + \si_r(X_r)\mathrm{d}W_r\Big).
 \end{equation}
This restriction is closely related to the difficulty of handling different kernels in the drift and in the volatility under a Girsanov transformation. Since the change of measure acts through the volatility coefficient, one needs the dependence on the Volterra parameter to factor in a compatible way in both terms. In particular, if the drift and the volatility involve genuinely different kernels, the present argument does not apply directly. 
}
\end{remark}

\section{Examples}\label{sec:examples}

\subsection{A---very---simple starter}

We start with a simple example to illustrate the lifting procedure on an elementary problem. Consider the uncontrolled SDE
\[
\mathrm{d}X_r = X_r\mathrm{d}r + \mathrm{d}W_r.
\]
The corresponding lifted dynamics, started at time $t$ from some $\bx\in H$, is the family $\bX^{t,\bx}\coloneqq (X^{t,\bx,s})_{0\le s\le T}$ defined by
\[
X_r^{t,\bx,s} = \bx^s + \int_t^r X_u^{t,\bx,u}\mathrm{d}u + W_r-W_t,\; r\in[t,T],\; s\in[0,T].
\]

Set $u(t,\bx)\coloneqq \dbE^\P\big[X_T^{t,\bx,T}\big].$ If $m(r)\coloneqq \dbE^\P[X_r^{t,\bx,r}]$, then
\[
m(r)=\bx^r+\int_t^r m(u)\mathrm{d}u.
\]
Writing $y(r)\coloneqq \int_t^r m(u)\mathrm{d}u$, we obtain the ODE
\[
y^\prime(r)=\bx^r+y(r),\; y(t)=0,
\]
and therefore
\[
y(r)=\mathrm e^{r}\int_t^r \mathrm e^{-u}\bx^u\mathrm{d}u.
\]
Hence
\[
u(t,\bx)=m(T)=\bx^T+\mathrm e^{T}\int_t^T \mathrm e^{-s}\bx^s\mathrm{d}s.
\]

As a continuous linear functional on $H$
\[
D_\bx u(t,\bx)\cdot h = h^T+\mathrm e^{T}\int_t^T \mathrm e^{-s}h^s\mathrm{d}s,\; h\in H,\; \text{\rm and}\; \pa_t u(t,\bx)=-\mathrm e^{T-t}\bx^t.
\]
Since the drift of the lifted dynamics is the constant function $B(t,\bx)(s)=\bx^t$, we obtain
\[
D_\bx u(t,\bx)\cdot B(t,\bx) = \bx^t+\mathrm e^{T}\int_t^T \mathrm e^{-s}\bx^t\mathrm{d}s = \mathrm e^{T-t}\bx^t = -\pa_t u(t,\bx).
\]
Moreover, $D_{\bx\bx}^2u\equiv0$. Hence $u$ solves the corresponding linear equation in the lifted space.

\subsection{Linear--quadratic control problem with kernel}

Let $\phi : [0,T] \longrightarrow \dbR$ be continuous, and set
\[
k_t(s)\coloneqq \1_{(t,T]}(s)\phi(s-t) + \1_{[0,t]}(s)\phi(0), \; (t,s)\in[0,T]^2.
\]
We consider the controlled Volterra-type dynamics
\[
 X_t = x_0 + \int_0^t \phi(t-s)\big( (X_s + \a_s)\mathrm{d}s + \mathrm{d}W_s\big), 
\]
and the control problem
\[
V_0 \coloneqq  \sup_{\alpha \in \cA} \dbE^{\P^\smallertext{\alpha}}\bigg[ - \frac 1 2 \int_0^T (X_s^2 + \a_s^2)\mathrm{d}s\bigg]. 
\]

Notice that there is no conceptual obstruction to adding a terminal reward in this example; it would merely modify the terminal condition in the Riccati system, at the price of heavier notation. The corresponding lifted problem writes
\begin{equation}\label{liftedLQ}
 V(t, \bx) \coloneqq   \sup_{\alpha \in \cA} \dbE^{\P^\smallertext{\alpha}}\bigg[ - \frac 1 2 \int_t^T \big((X_s^{t, \bx, s})^2 + \a_s^2\big)\mathrm{d}s\bigg], 
 \end{equation}
where the flow $\bX^{t, \bx} \coloneqq  ( X^{t,\bx,s})_{0 \le s \le T}$ is such that
\[
X_r^{t, \bx, s} = \bx^s + \int_t^{r} k_\th(s)\big( X_\th^{t, \bx, \th} + \a_\th \big) \mathrm{d}\th + \int_t^{r}k_\th(s) \mathrm{d}W_\th,
\qquad s\in[0,T].
\]

\medskip
We easily see that the dynamic programming equation corresponding to this problem is
\begin{align}\label{DPE-LQ}
\nonumber&-\pa_t u(t,\bx) -  \frac 1 2 \iint_{[0,T]^\smalltext{2}} D_{\bx\bx}^2 u(t,\bx)(r,s)k_t(r)k_t(s)\mathrm{d}r\mathrm{d}s + x^t \int_0^T D_\bx u(t, \bx)(r) k_t(r)\mathrm{d}r\\
&\quad + \frac 1 2 \bigg(\int_0^T D_\bx u(t, \bx)(r) k_t(r) \mathrm{d}r \bigg)^2 + \frac{(x^t)^2}{2} = 0, 
\end{align}
with boundary condition $u(T, \cdot) = 0$. Here we use Lebesgue measure instead of $\m$ as reference measure and $\dbL^2([0,T], \mathrm{d}t)$ as reference space to represent the Fréchet derivatives of $u$ (which does not make a difference in the context of classical solutions). 

\medskip
Our objective is to find a solution $u$ of the form
\[
u(t, \bx) =  \frac 1 2 \iint_{[0,T]^\smalltext{2}} c(t,r,s) x^r x^s \mathrm{d}r\mathrm{d}s, 
\]
where $c$ is a measurable $\dbR$-valued function defined on $[0,T]^3$, which is symmetric in its last two variables. Let us compute formally the derivatives of $u$
\begin{align*}
&\pa_t u(t,\bx) =  \frac 1 2 \iint_{[0,T]^\smalltext{2}} \pa_t c(t,r,s) x^r x^s \mathrm{d}r\mathrm{d}s, \; D_\bx u(t,\bx)(r) =  \int_0^T c(t,r,s)x^s\mathrm{d}s, \; D_{\bx \bx}^2 u(t,\bx)(r,s) = c(t,r,s).
\end{align*}
Observing that $x^t = \int_0^T \d_t(s)x^s\mathrm{d}s$, where $\d_t$ is the Dirac mass at $t$, we also compute
\begin{gather*}
x^t \int_0^T D_\bx u(t, \bx)(r) k_t(r)\mathrm{d}r =  \iint_{[0,T]^\smalltext{2}} c(t,r,s)k_t(r)\d_t(s) x^r x^s \mathrm{d}r\mathrm{d}s, \\
(x^t)^2 = \iint_{[0,T]^\smalltext{2}} \d_t(r)\d_t(s)x^r x^s \mathrm{d}r\mathrm{d}s, 
\end{gather*}
and
\begin{align*}
\bigg(\int_0^T D_\bx u(t, \bx)(r) k_t(r)\mathrm{d}r \bigg)^2 &=  \iint_{[0,T]^\smalltext{2}} \bigg(\int_0^T c(t,r,\th)k_t(\th)\mathrm{d}\th \bigg)\bigg(\int_0^T c(t,\t,s)k_t(\tau)\mathrm{d}\t \bigg)x^r x^s \mathrm{d}r \mathrm{d}s.
\end{align*}
Introduce the notation
\[
(c \star \phi)(t,r) \coloneqq  \int_0^T c(t,r,\th)k_t(\th)\mathrm{d}\th. 
\]
Note that, since $c$ is symmetric in $r$ and $s$, we also have 
$(c \star \phi)(t,s) = \int_0^T c(t,\t,s)k_t(\t)\mathrm{d}\t$. Plugging all these expressions into the dynamic programming equation \eqref{DPE-LQ}, we see that $c$ satisfies the following equation
\begin{equation}\label{first-riccati}
\pa_t c(t,r,s) =  -(c\star\phi)(t,r)(c\star\phi)(t,s) - 2 \d_t(s)(c\star\phi)(t,r) - \d_t(r)\d_t(s),
\end{equation}
with terminal condition $c(T, \cdot, \cdot) = 0$. 


\begin{remark}
The above verification extends to more general linear--quadratic kernels, similarly to {\rm\citeauthor*{wang2023linear} \cite{wang2023linear}}. In particular, \eqref{first-riccati} corresponds to $(4.9)$ in {\rm\cite{wang2023linear}} for our choice of coefficients, and is closely related to {\rm Equation $(3.2)$} in {\rm\citeauthor*{abi2021linear} \cite{abi2021linear}}, where a Riccati system is derived for a special kernel by a different method.
\end{remark}
The next two sections spell out in detail two genuinely Volterra applications of the same mechanism: a regular propagator model for optimal liquidation and a stochastic advertising/goodwill model with carryover or distributed forgetting.

\subsubsection{Optimal liquidation with transient price impact under a regular propagator}\label{subsec:propagator-example}

A natural genuinely Volterra control problem covered by our framework is a regular-kernel version of optimal liquidation with transient price impact. This class of models originates in {\rm\citeauthor*{gatheral2012transient} \cite{gatheral2012transient}}, was extended to signal--adaptive trading with exponential propagators by {\rm\citeauthor*{neuman2022optimal} \cite{neuman2022optimal}}, and was recently solved in full generality for Volterra propagators by {\rm\citeauthor*{abijaber2025optimal} \cite{abijaber2025optimal}}.

\paragraph*{Model.}
Fix a bounded control set $A=[0,\bar \n]$, an initial inventory $q_0\in\dbR$, a temporary impact parameter $\l>0$, non-negative inventory penalties $\phi_\smallertext{\rm inv}$, $\varrho$, and an unaffected price process $P$ solving
\[
P_t = p_0 + \int_0^t \m_r(P_r)\mathrm{d}r + \int_0^t \si_r(P_r)\mathrm{d}W_r.
\]
Let $K:[0,T]^2\longrightarrow \dbR$ be continuous, satisfy $K(t,r)={K(t,t)}$ for $r>t$, and assume that for each fixed $r$ the map $t\longmapsto K(t,r)$ has the Sobolev regularity required in \Cref{prop:SDE}. For a trading speed $\n\in\cA$, define the inventory and impact states
\[
Q_t^\n = q_0 - \int_0^t \n_r \mathrm{d}r,
\;
I_t^\n = \int_0^t K(t,r)\n_r\mathrm{d}r.
\]
The execution price is $S_t^\n = P_t - \l \n_t - I_t^\n,$ and the liquidation problem consists in maximising
\begin{equation}\label{eq:propagator-objective}
J(\n)\coloneqq \dbE^{\mathbb P}\bigg[\int_0^T S_t^\n \n_t\mathrm{d}t + P_TQ_T^\n - \phi_\smallertext{\rm inv}\int_0^T (Q_t^\n)^2\mathrm{d}t - \varrho(Q_T^\n)^2\bigg].
\end{equation}
The hard liquidation constraint $Q_T^\n=0$ often imposed in execution models may either be approximated within our framework by taking a large terminal penalty $\varrho$, or treated through the state-constraint/stochastic-target perspective discussed later in this section.

\paragraph*{Fit with our framework.}
Set $X^\n\coloneqq(Q^\n,P,I^\n)\in\dbR^3$. Then \eqref{eq:propagator-objective} is exactly of the form \eqref{VolterraSDE}--\eqref{control-pb}, with coefficients
\[
b_r\big(t,(q,p,i),a\big)
=
\begin{pmatrix}
-a\\[0.2em]
\m_r(p)\\[0.2em]
K(t,r)a
\end{pmatrix},
\;
\si_r\big(t,(q,p,i),a\big)
=
\begin{pmatrix}
0\\[0.2em]
\si_r(p)\\[0.2em]
0
\end{pmatrix},
\]
and rewards
\[
f\big(t,(q,p,i),a\big)= (p-\l a-i)a - \phi_\smallertext{\rm inv} q^2,\; g(q,p,i)=pq-\varrho q^2.
\]
The Volterra parameter only enters through the propagator $K(t,r)$, so \Cref{prop:SDE,prop:aux-to-or,prop:existence-uniqueness} apply as soon as $t\longmapsto K(t,r)$ satisfies the Sobolev regularity assumed in Section~2.

\paragraph*{Lifted equation and feedback form.}
Writing $\bx=(\bx_1,\bx_2,\bx_3)\in H=(W^{1,2}([0,T]))^3$, the lifted coefficients are
\[
B(r,\bx,a)(s)=
\begin{pmatrix}
-a\\[0.2em]
\m_r(\bx_2^r)\\[0.2em]
K(s,r)a
\end{pmatrix},
\;\Si(r,\bx,a)(s)=
\begin{pmatrix}
0\\[0.2em]
\si_r(\bx_2^r)\\[0.2em]
0
\end{pmatrix},
\]
for $(r,s)\in[0,T]^2$. Hence the value function is characterised by the lifted HJB equation
\begin{align*}
-\pa_t u(t,\bx)
-\sup_{a\in A}\bigg\{
&\langle D_\bx u(t,\bx),B(t,\bx,a)\rangle_H
+\frac12\big\langle \Si(t,\bx,a),D_{\bx\bx}^2u(t,\bx)\Si(t,\bx,a)\big\rangle_H +(\bx_2^t-\l a-\bx_3^t)a-\phi_\smallertext{\rm inv}(\bx_1^t)^2
\bigg\}=0,
\end{align*}
with terminal condition
\[
u(T,\bx)=\bx_2^T\bx_1^T-\varrho(\bx_1^T)^2.
\]
If $u$ is smooth, define the two elements of $H$ given by
\[
e_Q(s)\coloneqq
\begin{pmatrix}
1\\
0\\
0
\end{pmatrix},
\;
K_t^\smallertext{\rm imp}(s)\coloneqq
\begin{pmatrix}
0\\
0\\
K(s,t)
\end{pmatrix},
\; s\in[0,T].
\]
Then the Hamiltonian is quadratic in $a$, and the interior first-order condition gives the feedback
\[
a^\star(t,\bx)=\Pi_A\bigg(\frac{\bx_2^t-\bx_3^t-\langle D_\bx u(t,\bx),e_Q\rangle_H+\langle D_\bx u(t,\bx),K_t^\smallertext{\rm imp}\rangle_H}{2\l}\bigg),
\]
where $\Pi_A$ denotes the Euclidean projection onto $A$. When $K\equiv0$, the model collapses to a standard Markovian liquidation problem. When $K(t,r)=\eta \mathrm{e}^{-\rho(t-r)}\1_{\{r\le t\}} {+ \eta \1_{\{r > t \}}}$, the transient impact becomes one-dimensional and one recovers the exponential-propagator setting of {\rm\cite{neuman2022optimal}}. For a genuinely non-exponential regular kernel, the state remains non-Markovian in finite dimension but becomes Markov after the Sobolev lift constructed in \Cref{sec:infinite-dimension,sec:DPE}.

\subsubsection{Advertising and goodwill with carryover or distributed forgetting}\label{subsec:advertising-example}

A second self-contained application outside finance is a stochastic advertising/goodwill model with carryover or distributed forgetting. Memory effects of this kind are classical in advertising theory and have been treated in delay form by {\rm\citeauthor*{gozzi2009controlled} \cite{gozzi2009controlled}} and, in a Volterra setting, by {\rm\citeauthor*{giordano2024optimal} \cite{giordano2024optimal}} and {\rm\citeauthor*{gozzi2024optimal} \cite{gozzi2024optimal}}.

\paragraph*{Model.}
Let $A=[0,\bar a]$ and consider an advertising rate $a\in\cA$. We model the goodwill stock by
\begin{equation}\label{eq:advertising-goodwill}
Y_t^a = y_0 + \int_0^t K(t-r)\big(\b_\smallertext{\rm ad}a_r-\d_\smallertext{\rm ad}Y_r^a\big)\mathrm{d}r + \si_\smallertext{\rm ad}\int_0^t K(t-r)\mathrm{d}W_r,
\end{equation}
where the coefficients $\b_\smallertext{\rm ad}$, $\d_\smallertext{\rm ad}$,and $\si_\smallertext{\rm ad}$ are positive, and the kernel $K:[0,T]\longrightarrow\dbR$ is such that the functions
\[
K_t(s)\coloneqq \1_{[t,T]}(s)K(s-t) {+ K(0)\1_{[0,t)}(s)},\; (t,s)\in[0,T]^2,
\]
have the Sobolev regularity required by \Cref{prop:SDE}. A natural performance criterion is
\begin{equation}\label{eq:advertising-objective}
J(a)\coloneqq \dbE^{\mathbb P}\bigg[\int_0^T \bigg(\eta Y_t^a-\frac{\L}{2}a_t^2\bigg)\mathrm{d}t+\G Y_T^a\bigg],
\end{equation}
where $\eta,\L,\G\ge0$ quantify, respectively, the running value of goodwill, the cost of advertising effort, and the terminal value of the product's reputation stock.

\paragraph*{Fit with our framework.}
The pair \eqref{eq:advertising-goodwill}--\eqref{eq:advertising-objective} is of the form \eqref{VolterraSDE}--\eqref{control-pb}, with
\[
b_r(t,x,a)=K(t-r)\big(\b_\smallertext{\rm ad}a-\d_\smallertext{\rm ad}x\big),
\;
\si_r(t,x,a)=\si_\smallertext{\rm ad}K(t-r),\; f(t,x,a)=\eta x-\frac{\L}{2}a^2,
\; g(x)=\G x.
\]
Thus the problem is genuinely Volterra whenever $K$ is neither a Dirac mass nor an exponential kernel giving rise to a finite-dimensional Markov reduction.

\paragraph*{Lifted equation and feedback form.}
For $\bx\in H$, the lifted coefficients are
\[
B(r,\bx,a)(s)=(\1_{\{r\le s\}}K(s-r) {+ \1_{\{ r > s \}}K(0)})\big(\b_\smallertext{\rm ad}a-\d_\smallertext{\rm ad}\bx^r\big)
=K_r(s)\big(\b_\smallertext{\rm ad}a-\d_\smallertext{\rm ad}\bx^r\big),
\]
and
\[
\Si(r,\bx,a)(s)=(\si_\smallertext{\rm ad}\1_{\{r\le s\}}K(s-r){ + \1_{\{ r > s \}}K(0)})=\si_\smallertext{\rm ad}K_r(s).
\]
Hence the value function solves
\begin{align*}
-\pa_t u(t,\bx)
-\sup_{a\in A}\bigg\{
&\big(\b_\smallertext{\rm ad}a-\d_\smallertext{\rm ad}\bx^t\big)\langle D_\bx u(t,\bx),K_t\rangle_H
+\frac{\si_\smallertext{\rm ad}^2}{2}\big\langle K_t,D_{\bx\bx}^2u(t,\bx)K_t\big\rangle_H +\eta \bx^t-\frac{\L}{2}a^2
\bigg\}=0,
\end{align*}
with terminal condition
\[
u(T,\bx)=\G\bx^T.
\]
If $u$ is smooth, the maximiser is explicitly
\[
a^\star(t,\bx)=\Pi_A\bigg(\frac{\b_\smallertext{\rm ad}}{\L}\langle D_\bx u(t,\bx),K_t\rangle_H\bigg).
\]
In particular, our method provides a direct HJB characterisation for a memory-goodwill model in which the entire carryover profile enters through the Volterra kernel. In the special case $\eta=0$ and $\G>0$, one recovers the terminal-goodwill objective considered in {\rm\cite{giordano2024optimal}}; quadratic running penalties in the state lead back to the Riccati structure derived above.

\begin{remark}
The same regular-kernel mechanism also appears in reduced-order hereditary engineering models. For instance, Galerkin truncations of stochastic heat equations with memory lead to finite-dimensional systems of the form
\[
X_t = x_0 + \int_0^t K(t-r)\big(AX_r+Bu_r\big)\mathrm{d}r + \int_0^t K(t-r)\G_0\mathrm{d}W_r,
\]
which fit the standing assumptions of {\rm \Cref{sec:infinite-dimension,sec:DPE}} as soon as $t\longmapsto K(t,r)$ is regular enough in the Volterra variable; see {\rm\citeauthor*{confortola2014optimal} \cite{confortola2014optimal}}. The full {\rm PDE} problem is of course infinite-dimensional already before the lift in the memory variable, so a complete treatment would require combining the present approach with a spatial infinite-dimensional analysis.
\end{remark}

\subsection{Time-inconsistent contract theory}

One of our main motivation is related to the works of \citeauthor*{hernandez2024time} \cite{hernandez2023me, hernandez2024time}, which focuses on principal--agent contracting problems in presence of a form of time-inconsistency in the agent's problem. We first recall the setting of the problem and the main results of \cite{hernandez2023me, hernandez2024time} when the time-inconsistency is due to the presence of a non-exponential discount factor.

\subsubsection{The agent's and principal's problems}

Given an output process 
\[ X_t^\a \coloneqq  X_0 + \int_0^t \a_r \mathrm{d}r + W_t, \]
where the effort $\a$ takes its value in some compact $[0, \bar a] \subset \dbR$, and a payment $\xi$ given by the principal, the agent wants to solve the control problem
\[ V_0^\smallertext{\rm A}(\xi) \coloneqq  \sup_{\a \in \cA} \dbE^{\P}\bigg[U_\smallertext{\rm A}(0,\xi) - \frac 1 2 \int_0^T c_r(0,\a_r)\mathrm{d}r \bigg],
\]
where $U_\smallertext{\rm A}$ corresponds to his utility function, $c$ his cost function and $f : \dbR \longrightarrow \dbR$ to the (possibly non-exponential) discount factor. 
The dynamic version of the agent's problem, \emph{i.e.} the control problem seen from any date $t \in [0,T]$, takes the following form
\[
 V_t^\smallertext{\rm A}(\xi) \coloneqq \sup_{\a \in \cA} \dbE^{\P}\bigg[ U_\smallertext{\rm A}(t,\xi) - \frac 1 2 \int_t^T c_r(t,\a_r)\mathrm{d}r \bigg]. 
 \]

Clearly, such a problem may not be handled through the traditional dynamic programming approach. Instead, the authors of \cite{hernandez2023me} assume that the agent plays a leader--follower game with the future versions of himself, therefore looking for a Stackelberg equilibrium, see \cite[Definition 2.6]{hernandez2023me}. In particular, they show that 
\[
 V_t^\smallertext{\rm A}(\xi) = Y_t^t, 
 \]
where the family of processes $(Y^s)_{\{s \in [0,T] \}}$ satisfies the backward system
\[
Y_t^s = U_\smallertext{\rm A}(s,\xi) - \int_t^T c_r^\star(s,Z_r^r)\mathrm{d}r - \int_t^T Z_r^s \mathrm{d}W_r, \; t\in[0,T],
\]
with $c_r^{\star}(z) \coloneqq c_r(a_r^{\star}(z))$, where $a^\star$ corresponds to a Stackelberg equilibrium. Assuming $c$ is continuous in both $r$ and $a$ and non-negative, we observe that, by boundedness of the controls $\a$, $c^\star$ takes its values in some compact $[0,\bar c]$.
In the spirit of \cite{cvitanic2018dynamic}, this system is rewritten in a forward way and the principal optimises on $\xi$ by maximising with respect to $Z$ and $Y_0$
\[
V^\smallertext{\rm P} = \sup_{\{\xi : V^\smalltext{\rm A}(\xi) \ge R\}} \dbE^{\P}\big[U_\smallertext{\rm P}(X_T, \xi)\big] = \sup_{Y_0^\cdot \ge R} V(Y_0^\cdot), 
\]
where $R$ is the participation constraint (\emph{i.e.} the minimal utility guaranteed to the agent so that he accepts to sign the contract), and $V$ is defined by 
\begin{equation}\label{principal}
 V(Y_0^\cdot) \coloneqq \sup_{Z \in \cZ} \dbE^\P\big[U_\smallertext{\rm P}(X_T^Z, Y_T^{0,Z})\big], 
 \end{equation}
where $Y^{s,Z}$ has the forward dynamics under $\P$
\begin{equation}\label{forward-dynamics}
Y_t^{s,Z} = Y_0^{s} + \int_0^t c_r^\star(s,Z_r^r)\mathrm{d}r + \int_0^t Z_r^s \mathrm{d}W_r, \; t\in[0,T],
\end{equation}
and $\cZ$ is the set of square integrable doubly indexed processes such that 
\begin{equation}\label{stochastic-target1} 
U_\smallertext{\rm A}^{(-1)}(s, Y_T^s) = U_\smallertext{\rm A}^{(-1)}(0, Y_T^0),\; \forall s \in [0,T], 
\end{equation}
where $U_\smallertext{\rm A}^{(-1)}$ denotes the inverse of $U_\smallertext{\rm A}$ with respect to the second variable. Our goal is to study the control problem \eqref{principal} by using the setting developed in the present paper. 

\subsubsection{Reformulation as a control problem with stochastic target constraints}

We assume that the control $\bZ \coloneqq (Z^s)_{\{s \in [0,T]\}}$ is such that $\bZ_t \in H$ for all $t \in [0,T]$.
It is clear that those conditions ensure that $\bY^\bZ \coloneqq  (Y^{Z,s})_{\{s \in [0,T]\}}$ takes its values in $H$ as well. We then rewrite the principal problem as
\begin{equation}\label{principal2}
 V(\by) = \sup_{\bZ \in \cZ_\smalltext{H}} \dbE^\P\big[U_\smallertext{\rm P}(X_T, Y_T^{0,\bZ})\big], 
\end{equation}
where $\by = \bY_0^{\bZ}$, $\cZ_H$ is the set of square integrable $H$-valued processes such that \eqref{stochastic-target1} is satisfied, or equivalently (abusing the notation and denoting a Sobolev function and its continuous representative the same way)
\[
Y_T^{0,\bZ}= \psi\big(s,Y_T^{s,\bZ}\big),\; \forall s \in [0,T], 
\]
with $\psi(s, y) \coloneqq  U_\smallertext{\rm A}\big(0, U_\smallertext{\rm A}^{(-1)}(s,y)\big)$ for all $(s, y) \in [0,T] \times \dbR$. Thus, the Principal must solve a stochastic control problem with stochastic target constraints on a Hilbert space. Note that this constraint is equivalent to
\begin{equation}\label{target2}
\ul g(\bY_T^\bZ) \le Y_T^{0,\bZ} \le \ol g(\bY_T^\bZ),
\end{equation}
with
\[
 \ul g(\bx) \coloneqq \min_{0 \le s \le T} \psi(s,\tilde x^s),
 \;
 \ol g(\bx) \coloneqq \max_{0 \le s \le T} \psi(s,\tilde x^s).
 \]

\paragraph*{Reachability set.}
The first step for the principal is to determine her reachability set, \emph{i.e.} the family of sets $\cV(t)$, $t\in[0,T]$, in which the state process $\bY^\bZ$ must lie at each time so that the target \eqref{target2} can still be reached. Following the original ideas of {\rm\citeauthor*{soner2002stochastic} \cite{soner2002stochastic}} in finite dimension, and more recently of {\rm\citeauthor*{bouchard2020quenched} \cite{bouchard2020quenched}} in the Wasserstein setting, it is natural to introduce
\[
w(t,\by)\coloneqq \inf\big\{ y \in \dbR : \ul g(\bY_T^{t,\by,\bZ}) \le \hat Y_T^{0,t,y,\bZ} \le \ol g(\bY_T^{t,\by,\bZ}),\; \mbox{for some} \; \bZ \big\},
\]
where $\hat Y^{0,t,y,\bZ}$ denotes the one-dimensional process started from $y$ at time $t$ and driven by the same control as $Y^{0,\bZ}$. Formally, one expects $w$ to be related to the geometric dynamic programming equation
\begin{equation}\label{pde:general}
 - \pa_t u + \sup_{\bz \in \cN(t, \by, D_\by u(t,\by))} \Big\{ - \langle c_t^{\star}(\cdot, z^t), D_\by u(t,\by) \rangle_H - \frac{1}{2}\langle \bz, D_{\by \by}^2 u(t,\by)\bz \rangle_H \Big\} = 0,
 \end{equation}
where
\[
\cN(t, \by, \bp) \coloneqq \big\{ \bz \in H : \langle \bz, \bp \rangle_H - z^0 = 0 \big\}.
\]
This equation is the natural infinite-dimensional analogue of the geometric PDE of {\rm\citeauthor*{soner2002dynamic} \cite{soner2002dynamic}}. In \Cref{sec:appen} we prove such a statement for epigraph-type stochastic target problems on Hilbert spaces. Deriving the fully coupled equation \eqref{pde:general} in the present context is substantially more delicate and lies beyond the scope of this paper. For this reason, we introduce the two auxiliary epigraph-type target problems
\begin{align*}
\ul w(t, \by) &\coloneqq \inf\big\{ y \in \dbR : \hat Y_T^{0,t,y,\bZ} \ge \ul g(\bY_T^{t,\by,\bZ}),\; \mbox{for some} \; \bZ \big\}, \\
\ol w(t, \by) &\coloneqq \sup\big\{ y \in \dbR : \hat Y_T^{0,t,y,\bZ} \le \ol g(\bY_T^{t,\by,\bZ}),\; \mbox{for some} \; \bZ \big\},
\end{align*}
for $(t,\by)\in[0,T]\times H$.

\medskip
As studied in \Cref{sec:appen} (note that the supremum problem can be rewritten as an infimum problem), the corresponding formal equations on $[0,T]\times H$ are
\begin{gather*}
- \pa_t \ul w(t, \by) + \sup_{\bz \in \cN(t, \by, \ul w(t,\by), D_\by \ul w(t,\by))} \bigg\{ c_t^\star(0, z^t) - \langle c_t^\star(\cdot, z^t), D_\by \ul w(t,\by) \rangle_H + \frac{1}{2} \langle \bz, D_{\by\by}^2 \ul w(t, \by)\bz \rangle_H \bigg\} = 0,\; \ul w(T,\by) = \ul g(\by), \\
- \pa_t \ol w(t, \by) + \inf_{\bz \in \cN(t, \by, \ol w(t,\by), D_\by \ol w(t,\by))} \bigg\{ c_t^\star(0, z^t) - \langle c_t^{\star}(\cdot, z^t), D_\by \ol w(t,\by) \rangle_H + \frac{1}{2} \langle \bz, D_{\by\by}^2 \ol w(t, \by)\bz \rangle_H \bigg\} = 0,\; \ol w(T,\by) = \ol g(\by),
\end{gather*}
with
\[
 \cN(t, \by, y, \bp) \coloneqq \big\{ \bZ \in H : Z^0 - \langle \bZ, \bp \rangle_H = 0 \big\} = \big\{ \bZ \in H : \langle \bZ, \bv_0 - \bp \rangle_H = 0 \big\},
 \]
where $\bv_0$ is the element of $H$ such that $\langle \bv_0, \bx \rangle_H = x^0$ for all $\bx \in H$.

\medskip
We now informally describe how we expect the reachability set to be related to these two functions. First, the following inclusion is immediate:
\[
 \cV(t) \subset \big\{ \by \in H : \ul w(t, \by) \le y^0 \le \ol w(t, \by)\big\}.
 \]
Indeed, if $\by \in \cV(t)$, then there exists $\bZ$ such that
\[
\ul g(\bY_T^{t, \by, \bZ}) \le Y_T^{0, t, y^0, \bZ} \le \ol g(\bY_T^{t, \by, \bZ}).
\]
Since the family $(Y^{s,t,\by,\bZ})_{s\in[0,T]}$ only interacts through the control $\bZ$, we have $Y^{0,t,\by,\bZ}=\hat Y^{0,t,y^0,\bZ}$, $\dbP$--a.s. Therefore the same control is admissible for both auxiliary target problems, which implies that $y^0 \in [\ul w(t,\by),\ol w(t,\by)]$. The converse inclusion is more delicate; for the finite-dimensional analogue we refer to {\rm\citeauthor*{hernandez2024closed} \cite[Lemma 5.3]{hernandez2024closed}}.

\medskip
Assuming the same argument can be carried out here, we shall consider in what follows that the closure of the principal's reachability set is given by
\begin{equation}\label{reachability-assumption}
 {\rm cl}\big(\cV(t)) =  \big\{ \by \in H : \ul w(t, \by) \le y^0 \le \ol w(t, \by) \big\}.
 \end{equation}

\paragraph*{Dynamic programming equation with epigraph-type reachability set.}
Given this characterisation of the reachability set, the principal's problem may be reformulated as a state-constrained control problem
\[
 V(t, x,\by) \coloneqq \sup_{\{\bZ : \bY_r^{t,\by,\bZ} \in \cV(r),\ \forall r \in [t,T]\}} \dbE^\P\big[U_\smallertext{\rm P}(X_T^{t,x,\bZ}, Y_T^{0,\bZ})\big],
 \]
with $(X^{t, x, \bZ}, \bY^{t, \by, \bZ})$ following the dynamics
\begin{gather*}
X_s^{t, x, \bZ} = x + \int_t^s a_r^\star(Z_r^r)\mathrm{d}r + W_s - W_t,\qquad
\bY_s^{t,\by,\bZ} = \by + \int_t^s c_r^\star(\cdot, Z_r^r)\mathrm{d}r + \int_t^s \bZ_r  \mathrm{d}W_r,
\quad s\in[t,T].
\end{gather*}
Introduce the sets
\[
\Oc(t)\coloneqq \big\{\by\in H: \ul w(t, \by) < y^0 < \ol w(t,\by)\big\},\quad \underline{\Oc}(t)\coloneqq \big\{\by\in H: \ul w(t, \by) = y^0\big\},\quad  \overline{\Oc}(t)\coloneqq \big\{\by\in H: y^0 = \ol w(t,\by)\big\}.
\]
Within the model of \cite{hernandez2024closed}, which relies on the previous work of \citeauthor*{bouchard2010optimal} \cite{bouchard2010optimal}, one is then naturally led to the formal dynamic programming equation
\begin{align}\label{PrincipalPDE}
\begin{cases}
\displaystyle- \pa_t V(t,x,\by) + F\big(t, x, \by, \big(\pa_x V, \pa_{xx}^2 V, D_\by V, D_{\by\by}^2 V, \pa_x D_\by V\big)(t,x,\by)\big) = 0,\; (t,x)\in[0,T)\times\R,\; \by\in\Oc(t), \\[0.3em]
 \displaystyle- \pa_t V(t,x,\by) + \ul F\big(t, x, \by, \big(\pa_x V, \pa_{xx}^2 V, D_\by V, D_{\by\by}^2 V, \pa_x D_\by V\big)(t,x,\by)\big) = 0, \; (t,x)\in[0,T)\times\R,\; \by\in\underline{\Oc}(t),  \\[0.3em]
\displaystyle  - \pa_t V(t,x,\by) + \ol F\big(t, x, \by, \big(\pa_x V, \pa_{xx}^2 V, D_\by V, D_{\by\by}^2 V, \pa_x D_\by V\big)(t,x,\by)\big) = 0, \; (t,x)\in[0,T)\times\R,\; \by\in\overline{\Oc}(t), \\[0.3em]
\displaystyle  V(T, x, \by) = U_\smallertext{\rm P}(x, y^0),
\end{cases}
\end{align}
where
\begin{gather*}
F\big(t, x, \by, p, A, \bp, \mathbf{A}, \mathbf{q}\big) \coloneqq \sup_{\bz \in H}\bigg\{ a_t^{\star}(z^t)p + \langle c_t^{\star}(\cdot, z^t), \bp \rangle_H + \frac{1}{2}\big(A + \langle \bz, \mathbf{A} \bz \rangle_H + 2 \langle \bz, \mathbf{q} \rangle_H \big)   \bigg\}, \\[0.5em]
\ul F\big(t, x, \by, p, A, \bp, \mathbf{A}, \mathbf{q}\big) \coloneqq \sup_{\bz \in \ul H(t, \by, \ul w)}\bigg\{ a_t^{\star}(z^t)p + \langle c_t^{\star}(\cdot, z^t), \bp \rangle_H + \frac{1}{2}\big(A + \langle \bz, \mathbf{A} \bz \rangle_H + 2 \langle \bz, \mathbf{q} \rangle_H \big)   \bigg\}, \\[0.5em]
\ol F\big(t, x, \by, p, A, \bp, \mathbf{A}, \mathbf{q}\big) \coloneqq \sup_{\bz \in \ol H(t, \by, \ol w)}\bigg\{ a_t^{\star}(z^t)p + \langle c_t^{\star}(\cdot, z^t), \bp \rangle_H + \frac{1}{2}\big(A + \langle \bz, \mathbf{A} \bz \rangle_H + 2 \langle \bz, \mathbf{q} \rangle_H \big)   \bigg\},
\end{gather*}
with the sets $\ul H$ and $\ol H$ defined by
\begin{gather*}
\ul H(t, \by,  w)  \coloneqq \bigg\{\bz \in H : \langle \bz, D_\by w(t, \by) \rangle_H - z^0 = 0, \; \mbox{and} \; c_t^{\star}(0, z^t) - \langle c_t^{\star}(\cdot, z^t), D_\by w(t, \by) \rangle_H + \frac{1}{2}\langle \bz, D_{\by\by}^2 w(t, \by) \bz \rangle_H \ge 0 \bigg\}, \\
\ol H(t, \by,  w)  \coloneqq \bigg\{\bz \in H : \langle \bz, D_\by w(t, \by) \rangle_H - z^0 = 0, \; \mbox{and} \; c_t^{\star}(0, z^t) - \langle c_t^{\star}(\cdot, z^t), D_\by w(t, \by) \rangle_H + \frac{1}{2}\langle \bz, D_{\by\by}^2 w(t, \by) \bz \rangle_H \le 0 \bigg\},
\end{gather*}
for all $(t, \by) \in [0,T] \times H$ and smooth $w : [0,T] \times H \to \dbR$.

\begin{remark}
The discussion above is heuristic at three distinct levels.

\medskip
$(i)$ We do not prove the identification of the principal's reachability set with the band
\[
\big\{\by\in H:\ul w(t,\by)\le y^0\le \ol w(t,\by)\big\}.
\]

\medskip
$(ii)$ Even assuming this identification, we do not establish the dynamic programming principle for the resulting state-constrained control problem on $[0,T]\times\dbR\times H$. 

\medskip
$(iii)$ We do not prove comparison, regularity, or verification results for the corresponding boundary-value problem. These would be required to turn the formal {\rm PDE} derivation into a full theorem for the principal's problem.

\medskip
Accordingly, the {\rm PDE} computations of this subsection should be read as formal motivation only. In {\rm\Cref{sec:utilexam}} below, we only use the resulting heuristic structure and do not need a full verification theorem.
\end{remark}

\paragraph*{General dynamic programming equation.}
In the general case, the reachability set $\cV$ might not be represented through functions of the state process. We therefore provide a general equation with geometric constraints. The intuition is the same as above: if the state variable lies in the interior of the reachability set, then we obtain the standard dynamic programming equation. At the boundary, one does not need to kill the whole volatility: tangential volatility is admissible. What matters is the usual viability condition, namely that the normal component of the volatility vanishes and that the second-order contribution does not point outside the reachable set. 

\medskip
Let $w : [0,T] \times H \longrightarrow \dbR$ be such that $\cV(t) = \{\by \in H : w(t, \by) \le 0\}$ (take for example $w(t,\by) = 1 - \1_{\by \in \cV(t)}$). We formally derive the following system of HJB equations
\begin{align}\label{PrincipalGeometricPDE}
\begin{cases}
\displaystyle- \pa_t V(t,x,\by) + F\big(t, x, \by, \big(\pa_x V, \pa_{xx}^2 V, D_\by V, D_{\by\by}^2 V, \pa_x D_\by V\big)(t,x,\by)\big) = 0,\; (t,x)\in[0,T)\times\R,\; \by\in \mathrm{Int}(\cV(t)), \\[0.3em]
 \displaystyle- \pa_t V(t,x,\by) + \pa F\big(t, x, \by, \big(\pa_x V, \pa_{xx}^2 V, D_\by V, D_{\by\by}^2 V, \pa_x D_\by V\big)(t,x,\by)\big) = 0, \; (t,x)\in[0,T)\times\R,\; \by \in \pa \cV(t),  \\[0.3em]
\displaystyle  V(T, x, \by) = U_\smallertext{\rm P}(x, y^0),
\end{cases}
\end{align}
where $F$ is defined as above and
\begin{gather*}
\pa F\big(t, x, \by, p, A, \bp, \mathbf{A}, \mathbf{q}\big) \coloneqq \sup_{\bz \in \pa H(t, \by, w)}\bigg\{ a_t^{\star}(z^t)p + \langle c_t^{\star}(\cdot, z^t), \bp \rangle_H + \frac{1}{2}\big(A + \langle \bz, \mathbf{A} \bz \rangle_H + 2 \langle \bz, \mathbf{q} \rangle_H \big)   \bigg\},
\end{gather*}
with
\begin{gather*}
\pa H(t, \by,  w)  \coloneqq \bigg\{\bz \in H : \langle \bz, D_\by w(t, \by) \rangle_H = 0, \; \mbox{and} \;   \langle c_t^{\star}(\cdot, z^t), D_\by w(t, \by) \rangle_H + \frac{1}{2}\langle \bz, D_{\by\by}^2 w(t, \by) \bz \rangle_H \le 0 \bigg\},
\end{gather*}
for all $(t, \by) \in [0,T] \times H$ and smooth $w : [0,T] \times H \longrightarrow \dbR$.

\subsubsection{The exponential utility case}\label{sec:utilexam}

In this section, we assume the agent's utility function is given by
\[
V_t^A(\xi) \coloneqq  \sup_{\a \in \cA} \dbE^{\P^\smalltext{\alpha}}\bigg[ U_\smallertext{\rm A}\bigg(f(T-t)\xi - \frac 1 2 \int_t^T f(r-t)\a_r^2 \mathrm{d}r \bigg) \bigg], 
\]
with $U_{\smallertext{\rm A}}(x) \coloneqq  -\frac{1}{\g_\smallertext{\rm A}}\mathrm{e}^{- \g_\smalltext{\rm A} x}$ and $f(t) \coloneqq  \mathrm{e}^{-\rho t}$, with positive $\g_\smallertext{\rm A}$ and  $\rho$. We also assume that the principal is risk neutral, that is
\[
 U_\smallertext{\rm P}(x, y) \coloneqq  x-y, \; \mbox{for all} \; (x,y) \in \dbR^2. 
 \]
We shall discuss the conjecture made in \cite{hernandez2024time}, claiming that the optimal incentive $\bZ^\star$ designed by the principal in this setting is deterministic, similarly to the case where $U_\smallertext{A} = \mathrm{Id}$. In this setting, the dynamics controlled by the principal writes
\[
\mathrm{d}Y_t^s = \bigg(\mathrm{e}^{-\rho(t-s)} \frac{(Z_t^t)^2}{2} + \gamma \frac{(Z_t^s)^2}{2}\bigg)\mathrm{d}t + \sigma Z_t^s \mathrm{d}W_t.
\]
Assume there exists a deterministic optimal control $\bZ^\star_t = \bz_t$. We first deduce from the constraint
\[
 Y_T^{s, \bZ^\star} = \frac{f(T-s)}{f(T)}Y_T^{0, \bZ^\star} = \mathrm{e}^{\rho s}Y_T^{0, \bZ^\star} ,\; \mbox{for all $s \in [0,T]$},
 \]
that $\int_0^T (\bz_r^s - \mathrm{e}^{\rho s} \bz_r^0)\mathrm{d}W_s$ is deterministic, from which we deduce that $\bz_t^s = \mathrm{e}^{\rho s}\bz_t^\mathrm{0}$ for Lebesgue--a.e. $(t ,s) \in [0,T]^2$.

\medskip
 Then, given the dynamics of $(X, \bY)$, it is clear that the dynamic value function $V : [0,T] \times \dbR \times H \longrightarrow \dbR$ of the principal is smooth, as
\[
 V(t, x, \by) \coloneqq  \dbE^\P\big[ X_T^{t, x, \bZ^{\star}} - Y_T^{0, t, \by, \bZ^{\star}} \big] = x - y^0 + \int_t^T \bigg(z_r^{r} - \mathrm{e}^{-\rho r} \frac{(z_r^r)^2}{2} - \gamma_{\smallertext{\rm A}} \frac{(z_r^0)^2}{2} \bigg)\mathrm{d}r. 
 \]
Therefore, by the dynamic programming equation \eqref{PrincipalPDE}, if $(x, \by) \in \mathrm{Int}(\cV(t))$, we have
\[
 -\pa_t V(t,x, \by) - \sup_{\bZ \in H} \bigg\{ \pa_x V(t, x, \by) Z^t + \bigg\langle D_\by V(t, x, \by), \mathrm{e}^{-\rho (t-\cdot)} \frac{(Z^t)^2}{2} + \g \frac{\bZ^2}{2} \bigg\rangle_H \bigg\} = 0 ,
\]
 observing that the second order derivatives of $V$ are equal to $0$. Given that $\pa_x V = 1$ and $D_\by V = \bv_0$, the first order condition in the Hamiltonian writes
\[\bv_t - \mathrm{e}^{- \rho t} z^t \bv_t - \gamma z^0 \bv_0 =  \bv_t - z^0 \bv_t - \gamma z^0 \bv_0 = 0,
\] which is clearly impossible. Therefore, the conjecture formulated in \cite{hernandez2024time} is false whenever there exists $t \in [0,T)$ such that $\mathrm{Int}(\cV(t)) \neq \emptyset$. 

\begin{remark}\label{rem:smoothspuersol}
Of course, the previous argument does not invalidate the conjecture if it turns out that $\mathrm{Int}(\cV(t))$ is empty, which we have not been able to rule out. One possible strategy to prove that this cannot happen would be to find a smooth super-solution $w^\smallertext{+}$ of the {\rm PDE} associated to the reachability set. Indeed, in this case the non-empty domain $w^\smallertext{+}\leq 0$ would be included in the reachability set. Despite several attempts, we have not been able to construct such a function.
\end{remark}

\subsubsection{Study for a special discount factor}

We end the discussion on the principal--agent problem with an example where a reduction of dimension can be obtained. Consider the case where the agent's time-inconsistency only comes from the presence of a non-exponential discount factor
\[
 U_A(s, \xi) = f(T-s)U_{\smallertext{\rm A}}(\xi), \; c_t(s, a) = f(t-s)c_t(a). 
 \]
Moreover, we assume that the function $f$ takes the form
\[ f(t) = \sum_{k=1}^N \b_k \mathrm{e}^{- \rho_\smalltext{k} t}, 
\]
for some $N \in \dbN^{\star}$, where the sequences $(\b_k)_{k\in\{1,\dots,N\}}$ and $(\rho_k)_{k\in\{1,\dots,N\}}$ are non-negative, such that $\sum_{k=1}^N \b_k = 1$ and $\rho_k \neq \rho_j$ for all $(k,j)\in\{1,\dots,N\}^2$ with $k\neq j$. We then have the following inclusion.
\begin{lemma}\label{lem:finite-dimension}
For all $k \in \{1, \dots, N\}$, denote $\phi_k(s) \coloneqq \b_k \mathrm{e}^{\rho_\smalltext{k} s}$. Then we have, for all $t \in [0,T]$
\[
 \big(\bY_t^\bZ, \bZ_t\big) \in \operatorname{span}\{ \phi_1, \dots, \phi_N \}\times \operatorname{span}\{ \phi_1, \dots, \phi_N \}. 
 \]
\end{lemma}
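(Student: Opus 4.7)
The strategy is to exploit the exponential form of $f$ together with the terminal target constraint in order to force both processes to lie, as functions of the Volterra variable $s$, in the $N$-dimensional subspace $\mathrm{Vect}\{\phi_1,\dots,\phi_N\}$, and then propagate this property backward in time by Brownian martingale representation.

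First, I would rewrite the stochastic target \eqref{stochastic-target1}. Since $U_\smallertext{\rm A}(s,\xi)=f(T-s)U_\smallertext{\rm A}(\xi)$, the inverse satisfies $U_\smallertext{\rm A}^{(-1)}(s,y)=U_\smallertext{\rm A}^{-1}\!\bigl(y/f(T-s)\bigr)$ and the target collapses to $Y_T^{s,\bZ}=\frac{f(T-s)}{f(T)}Y_T^{0,\bZ}$ for all $s\in[0,T]$. Using $\phi_k(s)=\beta_k\mathrm{e}^{\rho_k s}$, one has the key identities
\[
f(T-s)=\sum_{k=1}^N\mathrm{e}^{-\rho_kT}\phi_k(s),\quad f(r-s)=\sum_{k=1}^N\mathrm{e}^{-\rho_kr}\phi_k(s),
\]
so that the drift kernel of \eqref{forward-dynamics} is already separated in the basis $\{\phi_k\}$. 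Introducing the martingale $M_t^s\coloneqq Y_t^{s,\bZ}-\int_0^t f(r-s)c_r^\star(Z_r^r)\mathrm{d}r=Y_0^s+\int_0^t Z_r^s\mathrm{d}W_r$ and plugging the two identities into $M_T^s$ yields
\[
M_T^s=\sum_{k=1}^N\phi_k(s)\,\xi_k,\quad\xi_k\coloneqq\frac{\mathrm{e}^{-\rho_kT}}{f(T)}Y_T^{0,\bZ}-\int_0^T\mathrm{e}^{-\rho_kr}c_r^\star(Z_r^r)\mathrm{d}r,
\]
each $\xi_k$ being a square-integrable $\cF_T$-measurable random variable, thanks to the integrability built into $\cZ_H$ and the boundedness of $c^\star$ noted at the beginning of the section.

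Second, taking conditional expectations gives $M_t^s=\sum_{k=1}^N\phi_k(s)N_t^{(k)}$ with $N_t^{(k)}\coloneqq\mathbb{E}[\xi_k\mid\cF_t]$ a square-integrable Brownian martingale. Applying the Brownian martingale representation theorem to each $N^{(k)}$ gives $N_t^{(k)}=N_0^{(k)}+\int_0^t\zeta_r^{(k)}\mathrm{d}W_r$ for some adapted $\zeta^{(k)}$, and substituting back
\[
Y_0^s+\int_0^t Z_r^s\mathrm{d}W_r=\sum_{k=1}^N\phi_k(s)N_0^{(k)}+\int_0^t\bigg(\sum_{k=1}^N\phi_k(s)\zeta_r^{(k)}\bigg)\mathrm{d}W_r.
\]
Uniqueness of the semi-martingale decomposition (and of the Brownian representation) then forces, for each $s$, $Y_0^s=\sum_k\phi_k(s)N_0^{(k)}$ and $Z_r^s=\sum_k\phi_k(s)\zeta_r^{(k)}$, $\mathrm{d}r\otimes\mathrm{d}\mathbb{P}$-a.e. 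Combined with $Y_t^{s,\bZ}=M_t^s+\sum_k\phi_k(s)\int_0^t\mathrm{e}^{-\rho_kr}c_r^\star(Z_r^r)\mathrm{d}r$, this yields the claim for both $\bY_t^\bZ$ and $\bZ_t$.

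The main technical obstacle is the step from pointwise-in-$s$ identities to an $H$-valued statement: one must check that $\bZ_t$ and $\bY_t^\bZ$ admit representatives in $H$ of the form $\sum_k\phi_k(s)(\cdot)$, which requires verifying that the $s$-regularity built into $\cZ_H$ and the injection of \Cref{lem:injection} allow a Fubini-type argument to promote the above a.e.\ equalities to genuine equalities in $H$. Linear independence of $\phi_1,\dots,\phi_N$ (guaranteed by $\rho_k\neq\rho_j$ for $k\neq j$) then ensures that the coefficients $N_0^{(k)}$, $N_t^{(k)}$ and $\zeta_r^{(k)}$ are uniquely determined by the data.
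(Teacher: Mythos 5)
Your proof is correct and takes essentially the same route as the paper's: both condition the terminal target constraint on $\cF_t$ and exploit the decomposition of $f$ in the basis $\{\phi_1,\dots,\phi_N\}$ to place $\bY_t^{\bZ}$ in the span. Your explicit use of the martingale representation theorem for the coefficients $N^{(k)}$ simply spells out the step the paper phrases as ``the stochastic integral $\int_0^t \bZ_r\,\mathrm{d}W_r$ lies in the span, hence so does $\bZ_t$''.
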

\begin{proof}
First observe that the stochastic target constraint \eqref{stochastic-target1} writes in this context
\[Y_T^{s, \bZ} = \frac{f(T-s)}{f(T)} Y_T^{0, \bZ} = \frac{1}{f(T)}\sum_{k=1}^N \phi_k(s)\mathrm{e}^{-\rho_k T} Y_T^{0, \bZ}, \]
for all $s \in [0,T]$. Taking the conditional expectation with respect to $\cF_t$ in the above equality, we obtain
\[
Y_t^{s, \bZ} + \dbE^\P\bigg[ \int_t^T f(T-s)c_r^{\star}(Z_r^r)\mathrm{d}r \bigg| \cF_t \bigg] =  \frac{1}{f(T)}\sum_{k=1}^N \phi_k(s)\mathrm{e}^{-\rho_\smalltext{k} T} \Bigg(Y_t^{0,\bZ} + \dbE^\P\bigg[ \int_t^T f(T)c_r^{\star}(Z_r^r)\mathrm{d}r \bigg| \cF_t \bigg]\Bigg), 
\]
which means that $\bY_t^\bZ \in \operatorname{span}\{ \phi_1, \dots, \phi_N \}$ since $f(T-\cdot) \in \operatorname{span}\{ \phi_1, \dots, \phi_N \}$. Furthermore, as both $\bY^\bZ$ and its drift term lie in this space, we have
\[
 \int_0^t \bZ_r \mathrm{d}W_r \in \operatorname{span}\{ \phi_1, \dots, \phi_N \}, \; \mbox{for all $t \in [0,T]$}, 
 \]
which in turn implies that $\bZ_t \in \operatorname{span}\{ \phi_1, \dots, \phi_N \}$ for all $t$.
 \end{proof}


The main consequence of \Cref{lem:finite-dimension} is that the principal's problem becomes finite-dimensional. Indeed, admissible controls write
\[ Z_\cdot^s = \sum_{k=1}^N \phi_k(s)\tilde Z_\cdot^k, \]
where the processes $(\tilde Z_\cdot^k)_{k\in\{1,\dots,N\}}$ are $\dbR$-valued and adapted. Similarly, we have $\bY = \sum_{k=1}^N \phi_k \tilde Y^k$,
where
\[
\tilde Y_t^k \coloneqq \tilde Y_0^k + \int_0^t \mathrm{e}^{-\rho_\smalltext{k} r}c_r^\star\Bigg(\sum_{\ell=1}^N \phi_\ell(r) \tilde Z_r^\ell\Bigg)\mathrm{d}r + \int_0^t \tilde Z_r^k \mathrm{d}W_r,
\]
for all $k \in \{1, \dots, N\}$, observing that $Z_r^r = \sum_{k=1}^N \phi_k(r) \tilde Z_r^k$. Then, decomposing $f(T-\cdot)$ in the basis $(\phi_1, \dots, \phi_N)$, we see that the stochastic target constraint \eqref{stochastic-target1} reformulates as
\begin{equation}\label{stochastic-target3}
\tilde Y_T^k = \b_k\frac{\mathrm{e}^{-\rho_\smalltext{k} T}}{f(T)}\sum_{\ell=1}^N \b_\ell \tilde Y_T^\ell,\; \mbox{for all $k \in \{1,\dots,N\}$}, 
\end{equation}
as $\bY_T^0 = \sum_{k=1}^N \b_k \phi_k(0) \tilde Y_T^k = \sum_{k=1}^N \b_k \tilde Y_T^k$. We easily see that this constraint means that the vector $\tilde Y_T \coloneqq  (\tilde Y_T^1, \dots, \tilde Y_T^N)$ must belong to the line $\cD$ in $\dbR^N$ defined by the system of equations
\[
 y_1 = \frac{\b_1}{\b_k}\mathrm{e}^{(\rho_\smalltext{k}-\rho_\smalltext{1})T}y_k,\; \mbox{for all $k \in \{2, \dots, N\}$}, 
 \]
or, equivalently, that
\[
 \max_{k \in \{2, \dots, N\}} \bigg\{\frac{\b_1}{\b_k}\mathrm{e}^{(\rho_\smalltext{k}-\rho_\smalltext{1})T}y_k\bigg\} \le y_1 \le \min_{k \in \{2, \dots, N\}} \bigg\{\frac{\b_1}{\b_k}\mathrm{e}^{(\rho_\smalltext{k}-\rho_\smalltext{1})T}y_k\bigg\}, 
 \]
which writes again as the combination of two epigraph-type constraints. Then, the Principal must solve the finite dimensional control problem under stochastic target constraint
\[
\sup_{\tilde Z : \tilde Y_T \in \cD} \dbE^\P\bigg[U_{\smallertext{\rm P}}\bigg(X_T, \sum_{k=1}^N \b_k \tilde Y_T^k\bigg)\bigg].
\]
This finite-dimensional reduction should be viewed as the identification of an explicit finite-dimensional invariant subspace of the lifted Hilbert state. It therefore provides a tractable special case of the present framework rather than a disconnected problem. Note that it falls under the setting of \citeauthor*{hernandez2024closed} \cite{hernandez2024closed}, who derive the corresponding dynamic programming equation in finite dimension.

\section{Markovian representation and approximation}\label{sec:markovian-representation}
In this section, we discuss how our framework provides a natural Markovian approximation for the Volterra-type dynamics \eqref{VolterraSDE}. When $b$ and $\si$ are as in \eqref{VolterraSDE}, we recall that the Volterra-type process $X$ is related to the infinite dimensional process $\bX$ from \eqref{liftedSDE1} in the following way
\[
X_t = \phi(t, \bX_t),\; \mbox{for all $t \in [0,T]$}, 
\]
where the mapping $\phi : [0,T] \times H \longrightarrow \dbR$ is defined by 
\[
 \phi(t, \bx) \coloneqq \tilde x^t, \; \mbox{for all $(t, \bx) \in [0,T] \times H$.} 
\]
By \eqref{ineq:inf-sobolev}, the mapping $\phi(t, \cdot)$ is a continuous linear form on $H$ for all $t \in [0,T]$. Therefore, by Riesz's representation theorem, there exists an $H$-valued mapping $t \longmapsto  v_t$ such that 
\[
\phi(t, \bx) = \langle v_t, \bx \rangle_H,\; \mbox{for all $(t, \bx) \in [0,T] \times H$}.
\]
Note that $v_t$ is a Sobolev solution of the equation
\begin{equation}\label{pde-v}
v_t - \pa_s^2 v_t = \d_t ,\; \mbox{for all $t \in [0,T]$}.
\end{equation}
Note that $t$ is only a parameter here. Solving the equation on $(0,t)$ and $(t,T)$, and using the continuity condition of $s \longmapsto  v_t^s$ in $t$, we see that $v$ has the form
\begin{equation}\label{function-v}
 v_t^s = \frac{1}{2}\big(A\mathrm{e}^{s} + B\mathrm{e}^{-s}\big) + \frac{1}{2}\big(\mathrm{e}^{s-t} -\mathrm{e}^{-(s-t)}\big)\1_{\{s \ge t\}}, \; \mbox{$(A, B) \in \dbR^2$}. 
 \end{equation}

Let $(e_k)_{k \in \mathbb N^\smalltext{\star}}$ be an orthonormal basis of $H$. Denote for any $t\in[0,T]$ and any $k\in\mathbb N^\star$ by $v_t^k$ and $X_t^k$ the projections of $v_t$ and $\bX_t$ on $e_k$. Then
\begin{equation}\label{X-scalar} 
X_t = \sum_{k=1}^\infty v_t^k X_t^k.
\end{equation}
For all $k \in\mathbb N^\star$, $X^k$ solves the SDE
\[
X_t^k = x_k + \int_0^t b_r^k(X_r^0, \a_r)\mathrm{d}r + \int_0^t \si_r^k(X_r^0, \a_r)\mathrm{d}W_r,
\]
where
\[
x_k \coloneqq  \langle \bx, e_k \rangle_H, \;
b_t^k(x,a) \coloneqq   \langle b_t(\cdot, x,a), e_k \rangle_H, \;
\si_t^k(x,a) \coloneqq   \langle \si_t(\cdot, x,a), e_k \rangle_H.
\]
Moreover, for every $k\in\mathbb N^\star$, the map $t\longmapsto v_t^k$ is of class $C^1$. Therefore, differentiating \eqref{X-scalar} formally yields
\[
 \mathrm{d}X_t^0 = \bigg(\sum_{k=1}^\infty \pa_t v_t^k X_t^k + \sum_{k=1}^\infty v_t^k b_t^k(X_t^0,\a_t)\bigg)\mathrm{d}t + \sum_{k=1}^\infty v_t^k \si_t^k(X_t^0, \a_t)\mathrm{d}W_t,
\]
where $X^0\coloneqq X$. We are thus led to the infinite-dimensional Markovian system
\begin{equation}\label{Markov}
\begin{cases}
\displaystyle X_t^0 = x_0 + \int_0^t \bigg(\sum_{k=1}^\infty \pa_t v_r^k X_r^k + \sum_{k=1}^\infty v_r^k b_r^k(X_r^0, \a_r)\bigg)\mathrm{d}r + \int_0^t \sum_{k=1}^\infty v_r^k \si_r^k(X_r^0, \a_r)\mathrm{d}W_r, \\[0.8em]
\displaystyle X_t^k = x_k + \int_0^t b_r^k(X_r^0, \a_r)\mathrm{d}r + \int_0^t \si_r^k(X_r^0, \a_r)\mathrm{d}W_r,\; k \in\mathbb N^\star.
\end{cases}
\end{equation}
A natural finite-dimensional approximation is obtained by truncating the sums in \eqref{Markov}. Thus, for $n \in\mathbb N^\star$, we consider the $(n+1)$-dimensional dynamics
\begin{equation}\label{Markov-approximate}
\begin{cases}
\displaystyle X_t^{0,n} = x_0 + \int_0^t \bigg(\sum_{k=1}^n \pa_t v_r^k X_r^{k,n} + \sum_{k=1}^n v_r^k b_r^k(X_r^{0,n}, \a_r)\bigg)\mathrm{d}r + \int_0^t \sum_{k=1}^n v_r^k \si_r^k(X_r^{0,n}, \a_r)\mathrm{d}W_r, \\[0.8em]
\displaystyle X_t^{k,n} = x_k + \int_0^t b_r^k(X_r^{0,n}, \a_r)\mathrm{d}r + \int_0^t \si_r^k(X_r^{0,n}, \a_r)\mathrm{d}W_r,\; k\in\{1,\dots,n\}.
\end{cases}
\end{equation}

\begin{proposition}\label{prop:markov-approximation}
We have
\[
 \lim_{n \to \infty} \dbE^\P\bigg[ \sup_{t \in [0,T]} \big| X_t^{0,n} - \bar X_t^n \big|^2 \bigg] = 0,
 \;
 \lim_{n \to \infty} \dbE^\P\big[ | X_t^{0,n} - X_t^0 |^2 \big] = 0,\; \forall t \in [0,T],
 \]
where $\bar X_t^n \coloneqq \sum_{k=1}^n v_t^k X_t^k$.
\end{proposition}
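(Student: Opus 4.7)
Since $\bar X^n$ is not introduced above, I will interpret it as the natural partial sum $\bar X_t^n \coloneqq \sum_{k=1}^n v_t^k X_t^k$ of the Parseval expansion $X_t^0 = \langle v_t, \bX_t \rangle_H = \sum_{k=1}^\infty v_t^k X_t^k$. Under this reading, the first assertion measures how close the truncated Markov system \eqref{Markov-approximate} is to a genuine semimartingale proxy of $X^0$, while the second transfers the result to the Volterra process $X^0 = X$ itself. The asymmetry between uniform and pointwise convergence is consistent with $X^0$ being a Volterra-type process, whose paths do not generally enjoy the semimartingale structure needed to bound $\sup_{t\le T}|X_t^0|$ in $L^2$.

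The main step is the first limit, which I would prove by a Gronwall-type $L^2$-stability argument. Applying the product rule to $\bar X_t^n = \sum_{k=1}^n v_t^k X_t^k$ (valid since $t \longmapsto v_t^k$ is $C^1$ by \eqref{function-v} and $X^k$ is an Itô process) produces an Itô expansion for $\bar X^n$, and subtracting it from \eqref{Markov-approximate} yields
\[
E_t^n \coloneqq X_t^{0,n} - \bar X_t^n = \Big(x_0 - \sum_{k=1}^n v_0^k x_k\Big) + \int_0^t \Delta_r^n\, \mathrm{d}r + \int_0^t \Gamma_r^n\, \mathrm{d}W_r,
\]
where $\Delta^n$ and $\Gamma^n$ bundle the three increments $\sum_{k=1}^n v_r^k(b_r^k(X_r^{0,n})-b_r^k(X_r^0))$, its $\si$-counterpart, and $\sum_{k=1}^n \pa_r v_r^k(X_r^{k,n}-X_r^k)$. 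I would then rely on two estimates: first, Cauchy--Schwarz and Parseval give
\[
\Big|\sum_{k=1}^n v_r^k\big(b_r^k(x)-b_r^k(y)\big)\Big| \le \|v_r\|_H \lVert b_r(x)-b_r(y)\rVert_H \le L\|v_r\|_H |x-y|,
\]
uniformly in $n$, with an analogous bound for $\si$ and for the $\pa_r v_r$-weighted sum involving $\bX^n-\bX$; second, a standard BDG/Gronwall argument applied coordinate-wise yields $\dbE^\P\big[\sup_{s\le r}\sum_{k=1}^n (X_s^{k,n}-X_s^k)^2\big] \le C \int_0^r \dbE^\P[|X_s^{0,n}-X_s^0|^2]\,\mathrm{d}s$. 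Writing $X^{0,n}-X^0 = E^n - T^n$ with the tail $T_r^n \coloneqq \sum_{k=n+1}^\infty v_r^k X_r^k$, applying BDG to the stochastic integral in $E^n$, and invoking Gronwall should deliver
\[
\dbE^\P\Big[\sup_{s\le t}|E_s^n|^2\Big] \le C\bigg(\Big|x_0 - \sum_{k=1}^n v_0^k x_k\Big|^2 + \int_0^T \dbE^\P\big[|T_r^n|^2\big]\,\mathrm{d}r\bigg),
\]
with a constant $C$ independent of $n$. The initial-data residual vanishes by the convergence of the Fourier expansion of $x_0$; the tail integral vanishes by dominated convergence since $|T_r^n|^2 \le \|v_r\|_H^2 \|\bX_r\|_H^2$ admits an integrable envelope via standard $L^2$-bounds on the $H$-valued SDE \eqref{liftedSDE1}. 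The second limit then follows from $|X_t^{0,n}-X_t^0|^2 \le 2|E_t^n|^2 + 2|T_t^n|^2$ together with the pointwise $L^2$-convergence of $T_t^n$.

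The most delicate point, in my view, will not be the triangle-inequality closure but the uniform-in-$n$ handling of the mixed drift term $\sum_{k=1}^n \pa_r v_r^k(X_r^{k,n}-X_r^k)$: I expect this to require verifying that $\pa_r v_r \in H$ with $\sup_{r\le T} \|\pa_r v_r\|_H < \infty$ (which is readable off \eqref{function-v} but non-trivial near the boundary $s=r$ because of the Dirac contribution in \eqref{pde-v}), and then coupling the $X^{k,n}-X^k$ estimates across $k$ in such a way that the Gronwall loop closes on $E^n$ alone, without requiring any a priori control on $\sup_t|X_t^0|$ that the Volterra dynamics does not provide.
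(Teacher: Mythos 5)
Your reading of $\bar X^n$ as the partial sum $\sum_{k=1}^n v_t^k X_t^k$ is exactly the paper's definition, and your overall strategy — triangle inequality against the tail $R_t^n=\sum_{k>n}v_t^kX_t^k$, a Gronwall loop on $E^n=X^{0,n}-\bar X^n$ fed by the Lipschitz/Parseval bound $\lvert\sum_{k\le n}v_t^k(b^k(x)-b^k(y))\rvert\le\lVert v_t\rVert_H\,L\,\lvert x-y\rvert$, and dominated convergence on $\int_0^T\dbE^\P[\lvert R_s^n\rvert^2]\mathrm{d}s$ via the envelope $\lvert R_s^n\rvert^2\le\lVert v_s\rVert_H^2\lVert\bX_s\rVert_H^2$ — is the paper's proof. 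Where you genuinely diverge is in the decomposition of $E^n$: you apply the product rule to $\bar X^n$ and subtract the SDE for $X^{0,n}$, which leaves the mixed drift term $\sum_{k\le n}\pa_r v_r^k(X_r^{k,n}-X_r^k)$ that you rightly flag as the delicate point. The paper avoids this term altogether by first observing the algebraic identity $X_t^{0,n}=\sum_{k=1}^n v_t^k X_t^{k,n}$ (the truncated system \eqref{Markov-approximate} is built precisely so that this holds), whence
\[
X_t^{0,n}-\bar X_t^n=\sum_{k=1}^n v_t^k\big(X_t^{k,n}-X_t^k\big)=\int_0^t\sum_{k=1}^n v_t^k\big(b_s^k(X_s^{0,n})-b_s^k(X_s^0)\big)\mathrm{d}s+\int_0^t\sum_{k=1}^n v_t^k\big(\si_s^k(X_s^{0,n})-\si_s^k(X_s^0)\big)\mathrm{d}W_s,
\]
with only $v_t^k$ (not $\pa_t v_t^k$) as weights, so no square-summability of $(\pa_t v_t^k)_k$ is ever needed. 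Your route is viable — the coordinate-wise estimate $\dbE^\P[\sum_{k\le n}(X_s^{k,n}-X_s^k)^2]\le C\int_0^s\dbE^\P[\lvert X_r^{0,n}-X_r^0\rvert^2]\mathrm{d}r$ does close the loop, and one can check from the explicit form \eqref{function-v} that $\pa_t v_t^k$ is the $H$-pairing of $e_k$ against an element of $H$ (the jump of $s\mapsto(\mathrm{e}^{s-t}-\mathrm{e}^{-(s-t)})\1_{\{s\ge t\}}$ at $s=t$ vanishes, so no Dirac survives), giving $\sup_t\sum_k\lvert\pa_t v_t^k\rvert^2<\infty$ — but this verification is exactly the work the identity-based decomposition buys you out of. One point where you are more careful than the paper: you track the initial residual $x_0-\sum_{k\le n}v_0^kx_k$, which the paper's statement $X_0^{0,n}=x_0$ in \eqref{Markov-approximate} quietly glosses over (it is inconsistent with $X_0^{0,n}=\sum_{k\le n}v_0^kx_k$ unless one adjusts the initial condition); your treatment of it as a vanishing Fourier remainder is the right fix.
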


\begin{proof}
Introduce, for $n\in\mathbb N^\star$,
\[
\bar X_t^n \coloneqq  \sum_{k=1}^n v_t^k X_t^k,
\;
R_t^n \coloneqq  \sum_{k=n+1}^\infty v_t^k X_t^k.
\]
Then $X_t^0=\bar X_t^n+R_t^n$, and therefore
\begin{equation}\label{est1}
| X_t^{0,n} - X_t^0 |^2 \le 2 | X_t^{0,n} - \bar X_t^n |^2 + 2| R_t^n |^2.
\end{equation}

By \eqref{Markov} and \eqref{Markov-approximate},
\[
X_t^{0,n} - \bar X_t^n = \int_0^t \sum_{k=1}^n v_s^k \big(b_s^k(X_s^{0,n}, \a_s) - b_s^k(X_s^0, \a_s)\big)\mathrm{d}s + \int_0^t \sum_{k=1}^n v_s^k\big(\si_s^k(X_s^{0,n}, \a_s) - \si_s^k(X_s^0, \a_s)\big)\mathrm{d}W_s.
\]
Hence, by Cauchy--Schwarz and Burkholder--Davis--Gundy
\begin{align*}
\dbE^\P\bigg[ \sup_{u \in [0,t]} \big| X_u^{0,n} - \bar X_u^{n} \big|^2\bigg]
&\le C \int_0^t \sum_{k=1}^n |v_s^k|^2 \dbE^\P\Bigg[\sum_{k=1}^n \big| b_s^k(X_s^{0,n}) - b_s^k(X_s^0) \big|^2 \\
&\quad+ \sum_{k=1}^n \big| \si_s^k(X_s^{0,n}, \a_s) - \si_s^k(X_s^0, \a_s) \big|^2\Bigg]\mathrm{d}s \\
&\le C \int_0^t \|v_s\|_H^2 \dbE^\P\Big[\|b_s(\cdot,X_s^{0,n}, \a_s)-b_s(\cdot,X_s^0, \a_s)\|_H^2 \\
&\quad+ \|\si_s(\cdot,X_s^{0,n}, \a_s)-\si_s(\cdot,X_s^0, \a_s)\|_H^2\Big]\mathrm{d}s.
\end{align*}
Since $b$ and $\si$ are Lipschitz-continuous as $H$-valued mappings and $t\longmapsto v_t$ is continuous on $[0,T]$, we deduce from \eqref{est1} that
\begin{equation}\label{est2}
\dbE^\P\bigg[ \sup_{u \in [0,t]} \big| X_u^{0,n} - \bar X_u^{n} \big|^2\bigg]
\le C \int_0^t \Big( \dbE^\P\big[| X_s^{0,n} - \bar X_s^n |^2\big] + \dbE^\P\big[| R_s^n |^2\big]\Big)\mathrm{d}s.
\end{equation}
Gronwall's lemma then yields
\begin{equation}\label{est3}
\dbE^\P\bigg[ \sup_{u \in [0,T]} \big| X_u^{0,n} - \bar X_u^{n} \big|^2\bigg]
\le C \int_0^T \dbE^\P\big[ | R_s^n |^2 \big]\mathrm{d}s.
\end{equation}

Finally
\[
 |R_s^n|^2 \le \sum_{k=n+1}^\infty |v_s^k|^2\sum_{k=n+1}^\infty |X_s^k|^2 \le \|v_s\|_H^2\|\bX_s\|_H^2,
\]
and $R_s^n\longrightarrow0$ for $\mathrm{d}s\otimes \dbP$--a.e. $(s,\omega)$. By dominated convergence,
\[
 \int_0^T \dbE^\P\big[ | R_s^n |^2 \big]\mathrm{d}s \longrightarrow 0.
\]
Combining this with \eqref{est3} proves the first convergence. The second one then follows from \eqref{est1}.
\end{proof}

{ \paragraph*{Application to control problems.} Consider the controlled problem defined by:
\[
 V_0^n \coloneqq \sup_{\a \in \cA} \dbE^\mathbb P\big[g(X_T^0)  \big].  
 \]
Clearly, this is a Markovian control problem with state process $(X^{0,n}, \dots, X^{n,n})$, and we have $V_0^n = V^n(0, x_0, \dots, x_n)$, where $V^n$ is solution of the finite-dimensional HJB equation
\begin{align*} 
&- \pa_t V^n - \sup_{a \in \dbA}\Bigg\{\bigg(\sum_{k=1}^n \pa_t v_t^k x_k + v_t^k b_t^k(x_0, a) \bigg)\pa_{x_0} V^n + \sum_{k=1}^n b_t^k(x_0, a) \pa_{x_k}V^n \\
&\quad+ \frac{1}{2} \sum_{i=1}^n \sum_{j=1}^n \si_t^i(x_0, a) \si_t^j(x_0, a) \pa_{x_i x_j}^2 V^n    \Bigg\} = 0, \; V^n(T, \cdot) = g,
\end{align*}
where we set $\si_t^0(x_0, a) \coloneqq \sum_{i=1}^n v_t^i \si_t^i(x_0,a)$. 

\medskip
Assuming that $g$ is Lipschitz-continuous, it is clear by \Cref{prop:markov-approximation} that $V_0^n \underset{n \to \infty}{\longrightarrow} V_0$, where $V_0$ is given by \eqref{control-pb}.  $V_0^n$ can be approximated by standard numerical methods for finite-dimensional Markovian control problems. However, because $n$ is typically large, it is generally preferable to rely on numerical methods tailored to high-dimensional settings, such as recent algorithms combining dynamic programming and neural networks, see \emph{e.g.} \citeauthor*{hure2021deep} \cite{hure2021deep}. 
}

\begin{remark}
The convergence rate of our Markovian approximation typically depends on the remainder
\[
R_t^n = \sum_{k=n+1}^\infty v_t^k X_t^k,
\]
which in turn depends strongly on the chosen Hilbert basis $(e_k)_{k\in\mathbb N^\smalltext{\star}}$. Therefore, identifying the best convergence rate for our approximation amounts to identifying a basis adapted to the geometry of the lifted state. This question is far beyond the scope of the present paper. We only note that several previous works have studied multidimensional Markovian approximations of Volterra-type dynamics $($typically for monotone kernels$)$, see for instance {\rm \citeauthor*{abi2019multifactor} \cite{abi2019multifactor}, \citeauthor*{abi2021linear} \cite{abi2021linear}, \citeauthor*{harms2021strong} \cite{harms2021strong}, \citeauthor*{alfonsi2024approximation} \cite{alfonsi2024approximation}, and \citeauthor*{bayer2023markovian} \cite{bayer2023markovian}}; see also \citeauthor*{khabou2025markov} \cite{khabou2025markov} in the context of Hawkes processes. 
\end{remark}

\section{Comparison with other results in the literature}\label{sec:comparison}

\subsection{Lifting approach}

In this section, we discuss how our methodology completes pre-existing results regarding the control of stochastic Volterra integral equations. It is common in the literature (see e.g. \citeauthor*{abi2021linear} \cite{abi2021linear} or \citeauthor*{hamaguchi2023markovian} \cite{hamaguchi2023markovian}) to assume that the control process has the following dynamics
\begin{equation}\label{SDE-kernel}
X_t^\a = x + \int_0^t K(t-r)\big(b(X_r^\a)\mathrm{d}r + \si(X_r^\a)\mathrm{d}W_r\big),
\end{equation}
where the kernel $K$ writes $K(t) \coloneqq \int_{\dbR} \mathrm{e}^{-\th t} \m(d\th)$ for some signed measure $\m$. Note that such kernels may be singular at $0$. In \cite{di2023lifting}, \citeauthor{di2023lifting} generalise this structure by writing 
\begin{equation}\label{kernel-structure}
K(t) = \langle g, \cS_t \n \rangle_{Y \times Y^\smalltext{\star}},
\end{equation}
where $Y$ is a UMD Banach space, $Y^\star$ its dual, $\cS$ a semi-group acting on $Y^\star$ and $g$, $\n$ two elements in $Y$ and $Y^\star$ respectively. We easily see that the examples of \cite{abi2021linear} and \cite{hamaguchi2023markovian} are covered by this structure. Then, switching the integrals in $\mathrm{d}r$ and $\mathrm{d}W_r$ and the duality bracket $\langle \cdot, \cdot \rangle_{Y \times Y^\smalltext{\star}}$, we can write
\[
X_t^\a = x + \langle g, \bX_t^\a \rangle_{Y \times Y^\smalltext{\star}}, 
\]
where $\bX^\a$ is a $Y^\star$-valued process satisfying the infinite dimensional Markovian SDE
\[
 \bX_t^\a = \int_0^t \cA^\star \bX_r \mathrm{d}r + \int_0^t \n b\big(\langle g, \bX_r^\a \rangle_{Y \times Y^\smalltext{\star}}\big)\mathrm{d}r + \int_0^t \n \si\big(\langle g, \bX_r^\a \rangle_{Y \times Y^{\smalltext{\star}}}\big)\mathrm{d}W_r, 
 \]
 thanks to the semi-group structure of $\cS$. The problem of controlling $X^\a$ is then reduced to the problem of controlling the infinite dimensional Markovian dynamics $\bX^\a$. 

\medskip
Our approach generalises this reduction for the case of regular kernels. In particular, we do not require any semi-group structure. Recall our general Volterra-type dynamics
\[
X_t^\a = x + \int_0^t b_r(t, X_r^\a)\mathrm{d}r + \si_r(t, X_r^\a)\mathrm{d}W_r. 
\]
Then, assuming that $b$ and $\si$ have Sobolev regularity in the `Volterra time' $t$, we may write
\[
b_r(t, x) = \langle b_r,(\cdot, x), v_t \rangle_H, \; \mbox{and} \; \si_r(t, x) = \langle \si_r,(\cdot, x), v_t \rangle_H,  
\]
where $v_t$ is defined by \eqref{function-v} for all $t \in [0,T]$. Then, observing that $x = \langle \bp(x), v_t \rangle_H$, we obtain
\[
 X_t^\a = \langle \bX_t^\a, v_t \rangle_H, 
\]
where $\bX^\a$ follows the $H$-valued Markovian SDE:
\[
 \bX_t^\a = \bp(x) + \int_0^t b_r\big(\cdot, \langle \bX_r^\a, v_r \rangle_H\big)\mathrm{d}r + \int_0^t \si_r\big(\cdot, \langle \bX_r^\a, v_r \rangle_H \big)\mathrm{d}W_r. 
 \]

\subsection{PDE approach} 

We now mention other works connecting Volterra dynamics to partial differential equations, which are often used jointly to a lifting approach. We first mention the contribution of \citeauthor*{viens2019martingale} \cite{viens2019martingale}, whose methodology is in spirit the closest to ours. Their idea is the following: given a process $X$ of the form
\begin{equation}\label{X-ViensZhang}
X_t = \int_0^t K(r,t) \mathrm{d}W_r, 
\end{equation}
with $K$ a possibly singular kernel, one wants to find a PDE characterising the process
\begin{equation}\label{Y-ViensZhang}
Y_t \coloneqq  \dbE^\P\bigg[ \int_t^T f(r, X_r)\mathrm{d}r + g(X_T) \bigg| \cF_t\bigg]. 
\end{equation}
As usual, the trick is to find a function $u : [0,T] \times \dbR \longrightarrow \dbR$ and an adapted process $\tilde X$ so that $Y_t = u\big(t, \tilde X_t\big)$. Of course, the connection between the stochastic representation above and the PDE is derived through an appropriate form of Itô's formula, for which a semi-martingale structure on $\tilde X$ is necessary. Since $X$ defined in \eqref{X-ViensZhang} is obviously not a semi-martingale, the authors of \cite{viens2019martingale} introduce the family of auxiliary processes $\Th$ defined by
\begin{equation}\label{lifting-ViensZhang}
 \Th_s^t \coloneqq \int_0^t K(r,s)\mathrm{d}W_r,\; \mbox{for all $s \ge t$},
 \end{equation}
and prove that 
\[
Y_t = u(t, X \otimes_t \Th^t), 
\]
where $(X \otimes_t \Th^t)_s \coloneqq X_s \1_{\{s < t\}} + \Th_s^t \1_{\{s \ge t\}}$ and $u$ satisfies the path-dependent PDE
\begin{equation}\label{PPDE-ViensZhang}
\pa_t u(t, \o) + \frac 1 2 \pa_{\o\o}^2 u(t, \o)(K(t, \cdot), K(t,\cdot)) + f(t,\o) = 0, \ u(t, \o) = g(T, \o),
\end{equation}
for all $(t,\o) \in [0,T] \times C^0([0,T], \dbR)$, where the path derivative is defined in the spirit of Dupire \cite{dupire2009functional}, on the space of càdlàg paths. This characterisation assumes that the derivatives of the function $u$ are well defined. This approach has been extended by \citeauthor*{wang2022path} in \cite{wang2022path} to the case of Volterra-type forward--backward SDEs. In \cite{wang2023linear}, \citeauthor*{wang2023linear} uses this dynamic programming equation to characterise the value function of a linear--quadratic problem by a system of path-dependent Riccati equations. 

\medskip
Let us now compare this approach with our work. We easily see that the family of random variables $(\Th_s^t)_{s \in [t,T]}$ corresponds to our $(X_t^s)_{s \in [t,T] }$, the only difference being that the domain of $(t,s)$ in \cite{viens2019martingale} is triangular (\emph{i.e.}, one requires $s > t$) due to the potential singularity of $K$, whereas it is rectangular in our framework. Furthermore, for fixed $t$, we easily see that $u(t, \o) = u(t, \o \1_{[t,T]})$ in this example. Therefore, introducing $\Th$ defined by \eqref{lifting-ViensZhang} is almost equivalent to our lifting from $X$ to $\bX = (X^{s})_{s \in [0,T]}$, with the difference that the space where the new state process takes its values is not the same. In \cite{viens2019martingale}, this would be the Banach space of càdlàg paths, whereas we chose the Hilbert space of Sobolev functions on $[0,T]$. Although this forces us to have stronger regularity assumptions on the coefficients of our dynamics, this dramatically reduces the need for regularity on $u$, as we can resort to the standard theory of viscosity solution on Hilbert space to derive our dynamics programming equation. 

\medskip
We mention again the recent contribution of \citeauthor*{di2023lifting} \cite{di2023lifting}, who characterise the solution of a Volterra control problem by means of a backward SDE and as the mild solution to the corresponding dynamic programming equation. More precisely, as highlighted in Section \ref{sect:BSDE}, they consider the controlled dynamics \eqref{Volterra-BSDE}, with the extra assumption that the kernel $K$ has the structure \eqref{kernel-structure}. They are then able to prove that, given that the value function has a first order Gâteaux derivative, the problem is characterised by mean of a semi-linear PDE on an UMD Banach space, satisfied in the sense of mild solutions. In our context, since we operate our lifting in a Hilbert space, this connection between backward SDEs and mild solution is standard (see \emph{e.g.} \citeauthor*{briand2008bsdes} \cite{briand2008bsdes}), and thus our stochastic representation of the value function in \Cref{prop:BSDE} immediately implies that \eqref{DPE} is satisfied in the mild sense.

\section{Case of singular kernels}\label{sec:singular-kernel}

In this section, we explain why the singular-kernel case is not excluded merely by probabilistic well-posedness. Consider, for instance, a Volterra process of the form
\[
X_t = \int_0^t K(t-r)\mathrm{d}W_r,\qquad t\in[0,T],
\]
with $K:(0,T]\longrightarrow \dbR$ singular at the origin. Such kernels naturally arise in rough-volatility models and in related control problems; see, for instance, \citeauthor*{fouque2019optimal} \cite{fouque2019optimal}. They also already appear in linear--quadratic control problems; see \citeauthor*{abi2021linear} \cite{abi2021linear}.

\medskip
From the viewpoint of existence and uniqueness of the stochastic Volterra dynamics itself, the literature is by now much richer than in the regular case considered here. Weak solution theories are available for convolution kernels \cite{abijaber2021weak}, for more general non-convolution kernels \cite{promel2023existence,abijaber2025weak}, and strong or pathwise-uniqueness results are known in some singular Hölder settings \cite{promel2023stochastic,promel2025pathwise}; see also \cite{hamaguchi2025weak} for completely monotone kernels.

\medskip
The main obstruction for the present paper is of a different nature. Our dynamic-programming approach relies on a rectangular lift $s\longmapsto X_t^s$ with values in a Sobolev/Hilbert space on $[0,T]$. For singular kernels, the natural parameter domain is typically triangular, and even after a rectangular extension the lifted paths usually belong at best to a Banach path space rather than to a Sobolev space. {Let us illustrate this difficulty with the example of a fractional Brownian motion
\[
 B_t^h\coloneqq \int_0^t (t-s)^{-h} \mathrm{d}W_s,
\]
with $h \in (0,1)$. First, to fit into our setting, we must extent the kernel to the rectangular time space $[0,T]^2$, which we can achieve by setting $\Si_t(s) \coloneqq  (t-s)^{-H}\1_{s < t}$. Next, we need to ensure that, for all $t \in [0,T]$, both $\Si_t$ and its weak derivative $\pa_s \Si_t(s) = -H(t-s)^{-H-1}\1_{s < t}$ are in some weighted $\dbL^2$ space. 

\medskip
However, we face the major difficulty here: a weighted measure is needed to ensure integrability of $\Si_t(\cdot)$ and $\pa_s \Si_t(\cdot)$ at the point where the kernel explodes, that is, when $s \uparrow t$. Since this point is $t$, the weighted measure, and therefore the lifting space for $\Si_t$ would depend on $t$; yet, our lifting approach requires the coefficients to be lifted in the same Hilbert space $H$ for all $t \in [0,T]$.
} 

\medskip
One may still derive formal path-dependent equations in some convenient Banach space, in the spirit of \citeauthor*{viens2019martingale} \cite{viens2019martingale}, but the Hilbert-space viscosity theory used in \Cref{sec:DPE} is no longer available in a form adapted to such lifts. Extending the dynamic-programming approach to singular kernels therefore requires not only probabilistic well-posedness of the underlying stochastic Volterra equation, but also a different lift and a matching viscosity/comparison theory on the resulting path space. {Another possible approach consists in considering regularised kernels; for example, going back to the example of the fractional Brownian motion above, one could defined
\[
 B_t^{h, \e} \coloneqq \int_0^t (t-s+\e)^{-h} \mathrm{d}W_s,
 \]
and naturally define the corresponding lift by setting $\Si_t(s) := (t-s+\e)^{-h}\1_{s < t} + \e^{-h}\1_{s \ge t}$. This approach is, for example, followed in \citeauthor*{di2019approximation} \cite{di2019approximation}, and can be applied to approximate control problems involving singular kernels by problems with regular kernels. Consider for example:
\[
 V_0 \coloneqq \sup_{\alpha} \dbE\big[g(X_T^\alpha)\big], 
 \]
with 
\[
X_t^\alpha = x + \int_0^t \phi(t-s)\big(b(X_s^\a, \a_s)\mathrm{d}s + \si(X_s^\a, \a_s)\mathrm{d}W_s\big),
\]
and $\phi$ such that $\phi(t) \longrightarrow \infty$ as $t \to 0$. We introduce the regular Volterra control problem
\[
V_0^\e \coloneqq \sup_{\alpha} \dbE\big[g(X_T^{\e,\alpha})\big], 
\]
with 
\[
X_t^{\e,\alpha} = x + \int_0^t \phi^\e(t-s)\big(b(X_s^{\e,\a,} \a_s)\mathrm{d}s + \si(X_s^{\e,\a}, \a_s)\mathrm{d}W_s\big),
\]
with $\phi^\e(\cdot) \coloneqq \phi(\cdot + \e)$. Observe that $\phi^\e$ does not explode at $0$, and therefore the problem $V_0^\e$ is directly covered by our theory. Then, assuming sufficient regularity on $g$, $b$ and $\si$, we may follow \citeauthor*{alfonsi2024approximation} \cite{alfonsi2024approximation} and show that
\[
 | V_0 - V_0^\e | \le C \int_0^T |\phi(s) - \phi^\e(s) |^2\mathrm{d}s,
 \]
which induces (under appropriate regularity and integrability conditions on $\phi$) that $V_0^\e \longrightarrow V_0$ as $\e \longrightarrow 0$.   
}

\appendix

\section{A class of stochastic target problems in Hilbert spaces}
\label{sec:appen}

We briefly extend the stochastic-target arguments of \citeauthor*{soner2002stochastic} \cite{soner2002stochastic} to an infinite-dimensional Hilbert setting. In this appendix, $H$ denotes an arbitrary Hilbert space, $W$ a standard one-dimensional Brownian motion, and $\dbA$ an open convex subset of a separable Banach space. Let $\cA$ be the set of càdlàg progressively measurable $\dbA$-valued controls.

\medskip
For $(t,\bx,y)\in[0,T]\times H\times\dbR$ and $\a\in\cA$, consider the SDEs
\begin{align}\label{SDEtarget}
\mathrm{d}\bX_r^{t,\bx,\a} &= B\big(r,\bX_r^{t,\bx,\a},\a_r\big)\mathrm{d}r + \Si\big(r,\bX_r^{t,\bx,\a},\a_r\big)\mathrm{d}W_r, \nonumber\\
\mathrm{d}Y_r^{t,\bx,y,\a} &= b\big(r,\bX_r^{t,\bx,\a},Y_r^{t,\bx,y,\a},\a_r\big)\mathrm{d}r + \si\big(r,\bX_r^{t,\bx,\a},Y_r^{t,\bx,y,\a},\a_r\big)\mathrm{d}W_r,
\end{align}
with initial condition $(\bX_t^{t,\bx,\a},Y_t^{t,\bx,y,\a})=(\bx,y)$. We assume throughout that $(B,\Si)$ and $(b,\si)$ satisfy the standard Lipschitz and linear-growth assumptions ensuring well-posedness of \eqref{SDEtarget}.

Given $g:H\longrightarrow\dbR$, define
\[
 \cA(t,\bx,y) \coloneqq \big\{ \a \in \cA : Y_T^{t,\bx,y,\a} \ge g(\bX_T^{t,\bx,\a}) \big\}.
\]
By monotonicity of $Y$ with respect to its initial condition, and because $\bX$ does not depend on $y$, we have
\[
 \cA(t,\bx,y) \neq \emptyset \Longrightarrow \cA(t,\bx,y^\prime)\neq\emptyset,\; \forall y^\prime \ge y.
\]
This leads to the value function
\begin{equation}\label{target-value-function}
w(t,\bx) \coloneqq \inf\big\{ y \in \dbR : \cA(t,\bx,y) \neq \emptyset\big\}.
\end{equation}

As in \cite{soner2002dynamic}, the dynamic programming principle reads as follows.

\begin{proposition}\label{DPPgeo}
For all $(t,\bx)\in[0,T]\times H$ and all $[t,T]$-valued stopping times $\th$, we have
\begin{equation}\label{target-DPP}
w(t,\bx) = \inf\big\{ y \in \dbR : \exists \a \in \cA,\; Y_\th^{t,\bx,y,\a} \ge w(\th,\bX_\th^{t,\bx,\a}),\; \dbP\text{\rm--a.s.}\big\}.
\end{equation}
\end{proposition}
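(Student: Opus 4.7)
The plan is to adapt the classical approach of Soner and Touzi \cite{soner2002stochastic} for stochastic target problems to our Hilbert space setting. Denote by $w^\prime(t, \bx)$ the right-hand side of \eqref{target-DPP}. I will establish the two inequalities $w \ge w^\prime$ and $w \le w^\prime$ in turn.

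The inequality $w \ge w^\prime$ is the easier direction and relies on the flow property of the SDEs \eqref{SDEtarget}. For any $y > w(t, \bx)$ there exists $\a \in \cA$ with $Y_T^{t, \bx, y, \a} \ge g(\bX_T^{t, \bx, \a})$ $\dbP$--a.s. Using pathwise uniqueness of \eqref{SDEtarget}, the restriction of $\a$ to $[\th, T]$ witnesses the non-emptiness of $\cA(\th, \bX_\th^{t, \bx, \a}, Y_\th^{t, \bx, y, \a})$, so that $Y_\th^{t, \bx, y, \a} \ge w(\th, \bX_\th^{t, \bx, \a})$ $\dbP$--a.s. This yields $y \ge w^\prime(t, \bx)$, and the claim follows by taking the infimum over admissible $y$.

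The inequality $w \le w^\prime$ is more delicate and combines three ingredients: the monotonicity of $Y$ in its initial condition $y$ (a consequence of the pathwise comparison for the scalar SDE satisfied by $Y$), a measurable selection argument, and a concatenation of controls. Given $y > w^\prime(t, \bx)$, pick $\epsilon > 0$ small enough that $y - \epsilon > w^\prime(t, \bx)$, and extract $\a^1 \in \cA$ with $Y_\th^{t, \bx, y - \epsilon, \a^1} \ge w(\th, \bX_\th^{t, \bx, \a^1})$ $\dbP$--a.s.; by strict monotonicity in the initial condition, $Y_\th^{t, \bx, y, \a^1}$ then exceeds $w(\th, \bX_\th^{t, \bx, \a^1})$ strictly. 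Next, by a measurable selection theorem applied to the set-valued map $(\bx^\prime, y^\prime) \longmapsto \{ \a \in \cA : Y_T^{\th, \bx^\prime, y^\prime, \a} \ge g(\bX_T^{\th, \bx^\prime, \a}) \}$, one builds a jointly measurable family $\a^{\bx^\prime, y^\prime}$ defined on the open set $\{(\bx^\prime, y^\prime) : y^\prime > w(\th, \bx^\prime)\}$. Concatenating $\a^1$ on $[t, \th)$ with $\a^{\bX_\th^{t, \bx, \a^1}, Y_\th^{t, \bx, y, \a^1}}$ on $[\th, T]$ produces a control in $\cA$ that is admissible for the target problem starting at $(t, \bx, y)$, whence $y \ge w(t, \bx)$ and therefore $w^\prime \ge w$.

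The main obstacle is the measurable selection step, since $\cA$ consists of progressively measurable $\dbA$-valued processes and the joint measurability of $(\omega, \bx^\prime, y^\prime) \longmapsto \a^{\bx^\prime, y^\prime}$ is required for the concatenated process to remain $\dbF^W$-progressively measurable. This can be handled by appealing to the Jankov--von Neumann theorem together with a separability argument on the space of controls, exploiting the Polish structure of $\dbA$ and the continuous dependence of $(\bX, Y)$ on the initial conditions. Everything else --- strong existence and uniqueness for \eqref{SDEtarget}, pathwise comparison in the scalar component, and the flow property --- is inherited from the standard theory of $H$-valued SDEs under the Lipschitz conditions already used in \Cref{prop:SDE}, so that no genuinely new infinite-dimensional difficulty arises beyond the measurable selection.
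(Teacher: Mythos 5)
Your proposal is correct and follows essentially the same route as the paper: the paper simply observes that the Soner--Touzi geometric DPP argument carries over verbatim once one checks that $\cA$ is separable and stable under concatenation (enabling the Jankov--von Neumann selection) and that the $H$-valued state dynamics satisfy the standard flow/measurability conditions, which are exactly the ingredients you identify and sketch in your two-inequality argument.
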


\begin{proof}
The proof of \cite[Theorem 3.1]{soner2002dynamic} extends verbatim to the present setting: the state process $(\bX,Y)$ is strong Markov, the control set is separable and stable under concatenation, and the measurable-selection step still follows from the Jankov--von Neumann theorem.
\end{proof}

For $(t,\bx,y,\bp)\in[0,T]\times H\times\dbR\times H$, define
\[
 \cN(t,\bx,y,\bp) \coloneqq \big\{ a\in\dbA : \si(t,\bx,y,a)-\langle \Si(t,\bx,a),\bp\rangle_H = 0 \big\}.
\]
We consider the PDE
\begin{equation}\label{target-DPE}
- \pa_t u(t,\bx) + \sup_{a \in \cN(t,\bx,u(t,\bx),D_\smalltext{\bx}u(t,\bx))} \Big\{ b\big(t,\bx,u(t,\bx),a\big) - \cL^a u(t,\bx) \Big\} = 0,\qquad u(T,\bx)=g(\bx),
\end{equation}
where
\[
\cL^a u(t,\bx) \coloneqq \langle B(t,\bx,a),D_\bx u(t,\bx)\rangle_H + \frac12 \langle \Si(t,\bx,a),D_{\bx\bx}^2u(t,\bx)\Si(t,\bx,a)\rangle_H.
\]

\begin{definition}[Viscosity solutions]
Let $u:[0,T]\times H\longrightarrow\dbR$ be continuous.

\medskip
$(i)$ $u$ is a viscosity super-solution of \eqref{target-DPE} if $u(T,\cdot)\ge g$ and, for every $\f\in C^{1,2}([0,T]\times H)$ such that $u-\f$ has a local minimum at $(t,\bx)$,
\[
- \pa_t \f(t,\bx) + \sup_{a \in \cN(t,\bx,u(t,\bx),D_\smalltext{\bx}\f(t,\bx))} \Big\{ b\big(t,\bx,u(t,\bx),a\big) - \cL^a \f(t,\bx) \Big\} \ge 0.
\]

$(ii)$ $u$ is a viscosity sub-solution of \eqref{target-DPE} if $u(T,\cdot)\le g$ and, for every $\f\in C^{1,2}([0,T]\times H)$ such that $u-\f$ has a local maximum at $(t,\bx)$,
\[
- \pa_t \f(t,\bx) + \sup_{a \in \cN(t,\bx,u(t,\bx),D_\smalltext{\bx}\f(t,\bx))} \Big\{ b\big(t,\bx,u(t,\bx),a\big) - \cL^a \f(t,\bx) \Big\} \le 0.
\]

$(iii)$ $u$ is a viscosity solution of \eqref{target-DPE} if it is both a viscosity super-solution and a viscosity sub-solution.
\end{definition}

\begin{theorem}\label{target-EDP}
Assume that $w$ is continuous, and that $\cN$ is continuous in the sense that if $a_0 \in \cN(t_0, \bx_0, y_0, \bp_0)$, then there exists a mapping $\hat a : [0,T] \times H \times \dbR \times H \longrightarrow \dbA$ such that
\[\begin{cases}
\displaystyle\hat a(t_0, \bx_0, y_0, \bp_0) = a_0, \\
\displaystyle\hat a(t, \bx, y, \bp) \in \cN(t, \bx, y, \bp) \ \forall (t,\bx, y, \bp) \in [0,T] \times H \times \dbR \times H.
\end{cases}
\]
Then $w$ is a viscosity solution of \eqref{target-DPE}. 
\end{theorem}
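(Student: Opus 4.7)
The idea is to adapt the strategy developed by \citeauthor*{soner2002stochastic} \cite{soner2002stochastic} for finite-dimensional stochastic target problems to the Hilbert space setting of this appendix. The starting point is to extract from the geometric DPP of \Cref{DPPgeo} two one-sided reformulations: $(\mathrm{GDP}^+)$ if $y > w(t_0, \bx_0)$, then there exists $\a \in \cA$ such that $Y_\theta^{t_0, \bx_0, y, \a} \ge w(\theta, \bX_\theta^{t_0, \bx_0, \a})$, $\dbP$--a.s., for every $\theta \in \cT_{t_0, T}$; and $(\mathrm{GDP}^-)$ if $y < w(t_0, \bx_0)$, then $\dbP[Y_\theta^{t_0, \bx_0, y, \a} < w(\theta, \bX_\theta^{t_0, \bx_0, \a})] > 0$ for every $\a \in \cA$ and every $\theta \in \cT_{t_0, T}$. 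Both follow from the infimum characterisation of $w$ together with the monotonicity of $Y$ in its initial condition and the independence of $\bX$ from $y$. The viscosity sub- and super-solution properties are then proved separately.

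\textbf{Subsolution property.} Let $\varphi \in C^{1,2}([0,T] \times H)$ be such that $w - \varphi$ has a strict local maximum at $(t_0, \bx_0)$ with $w(t_0, \bx_0) = \varphi(t_0, \bx_0)$, and assume by contradiction that the subsolution inequality fails by at least $2\delta > 0$. Then one finds $a_0 \in \cN(t_0, \bx_0, \varphi(t_0, \bx_0), D_\bx \varphi(t_0, \bx_0))$ with $b(t_0, \bx_0, \varphi(t_0, \bx_0), a_0) - \cL^{a_0} \varphi(t_0, \bx_0) - \partial_t \varphi(t_0, \bx_0) \ge \delta$. The continuity assumption on $\cN$ extends $a_0$ to a selector $\hat a$ with $\hat a(r, \bx, y, \bp) \in \cN(r, \bx, y, \bp)$ everywhere and $\hat a(t_0, \bx_0, \varphi(t_0, \bx_0), D_\bx \varphi(t_0, \bx_0)) = a_0$. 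I would then define the Markov feedback $\a_r \coloneqq \hat a(r, \bX_r, Y_r, D_\bx \varphi(r, \bX_r))$: by construction $\sigma(r, \bX_r, Y_r, \a_r) = \langle \Sigma(r, \bX_r, \a_r), D_\bx \varphi(r, \bX_r)\rangle$, so that It\^o's formula applied to $Y_r - \varphi(r, \bX_r)$ yields an absolutely continuous process whose drift is bounded below by $\delta/2$ on a neighbourhood $U$ of the test point (by continuity of the data). After replacing $\varphi$ by $\varphi + \eta(|t-t_0|^2 + \lVert \bx - \bx_0 \rVert_H^2)$, which preserves the previous inequalities for $\eta > 0$ small and turns the strict maximum into the quantitative bound $\varphi - w \ge \eta(|t-t_0|^2 + \lVert \bx - \bx_0 \rVert_H^2)$, I would initialise $Y_{t_0} = w(t_0, \bx_0) - \varepsilon$ and stop at $\theta \coloneqq \inf\{r \ge t_0 : (r, \bX_r) \notin U\} \wedge (t_0 + h)$. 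A direct pathwise bookkeeping then gives $Y_\theta \ge w(\theta, \bX_\theta)$ $\dbP$--a.s.\ for $\varepsilon$ small enough relative to $\delta, \eta, h$ and the radius of $U$, contradicting $(\mathrm{GDP}^-)$.

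\textbf{Supersolution property.} The strategy is symmetric but considerably more delicate, since $(\mathrm{GDP}^+)$ at $y = w(t_0, \bx_0) + \varepsilon$ provides a control $\a^\varepsilon$ that we do not design and that need not take values in $\cN$. Assuming the supersolution inequality fails by some $\delta > 0$, one has $b - \cL^a \varphi - \partial_t \varphi \le -\delta$ for every $a \in \cN$ in a neighbourhood of the test point. The local minimum $w \ge \varphi$ combined with $Y_\theta^{\a^\varepsilon} \ge w(\theta, \bX_\theta^{\a^\varepsilon})$ gives $Y_\theta^{\a^\varepsilon} - \varphi(\theta, \bX_\theta^{\a^\varepsilon}) \ge 0$ $\dbP$--a.s., so that in the It\^o decomposition of $Y - \varphi$ on $[t_0, t_0+h]$ the martingale part $\int (\sigma(r, \bX_r, Y_r, \a^\varepsilon_r) - \langle \Sigma(r, \bX_r, \a^\varepsilon_r), D_\bx \varphi(r, \bX_r) \rangle)\mathrm{d}W_r$ must be small in $L^2$. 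A Brownian-scaling argument would then force $\a^\varepsilon$ to remain close to $\cN$ along the trajectory, and the drift to be bounded above by $-\delta + o(1)$ as $h \to 0$. Taking expectations in the decomposition, I would deduce the inequality $\varepsilon \ge \delta h / 2 + o(h)$; sending $\varepsilon \to 0$ at fixed $h > 0$ small then yields the contradiction.

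\textbf{Main obstacle.} The heaviest technical point is expected to lie in the supersolution step and, more broadly, in transferring the finite-dimensional uniform continuity estimates of \cite{soner2002stochastic} to $H$: closed balls in $H$ are not compact, so uniform continuity over a compact set has to be replaced by local uniform continuity along the paths $(r, \bX_r^{t_0, \bx_0, \a})$ restricted to stay inside a bounded ball. This should be available through the pathwise moment estimates inherited from \eqref{SDEtarget} combined with the assumed continuity of the set-valued map $\cN$.
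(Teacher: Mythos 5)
Your sub-solution argument is essentially the paper's: the contradiction hypothesis produces $a_0\in\cN$ with drift at least $\delta$, the continuity assumption on $\cN$ upgrades it to a feedback selector $\hat a$, the resulting controlled pair makes $Y-\varphi(\cdot,\bX)$ a drift-dominated process with vanishing martingale part, and the strict-maximum gap on the parabolic boundary yields $Y_\theta\ge w(\theta,\bX_\theta)$ from the strictly sub-optimal initial level $w(t_0,\bx_0)-\varepsilon$, contradicting the geometric DPP. (The paper phrases the last step as a stochastic comparison with $\hat Y_s=\varphi(s,\bX_s)-\eta$, but this is the same computation.)

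The super-solution step is where your plan breaks down. You argue that since $Y_\theta^{\a^\varepsilon}-\varphi(\theta,\bX_\theta^{\a^\varepsilon})\ge 0$ a.s.\ while the process starts at $\varepsilon$, ``the martingale part must be small in $L^2$'', and then invoke a Brownian-scaling argument to force $\a^\varepsilon$ close to $\cN$. Neither claim holds as stated: a continuous martingale started at $0$ and bounded below by $-c$ can have arbitrarily large $L^2$ norm (stop a Brownian motion at $-c$ or at $K$ and let $K\to\infty$, which gives $\dbE[\langle M\rangle_\tau]=cK$), and since $\dbA$ is unbounded there is no a priori bound on the integrand $\si-\langle\Si,D_\bx\varphi\rangle$ that would rule this out. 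So positivity of $Y-\varphi$ gives you no quantitative control on the distance of $\a^\varepsilon$ from $\cN$, and the inequality $\varepsilon\ge\delta h/2+o(h)$ does not follow. The paper avoids this entirely by running the super-solution argument directly rather than by contradiction: after Itô's formula it changes measure to $\dbP^n$ with density $\cE(-n\int(\si-\langle\Si,D_\bx\varphi\rangle)\,\mathrm{d}W)$, so that the a.s.\ inequality $Y_{\theta_\delta}-\varphi(\theta_\delta,\bX_{\theta_\delta})\ge 0$ becomes, in expectation, a bound on the drift term minus $n$ times the squared martingale integrand; letting $n\to\infty$ simultaneously forces the integrand to vanish (hence the control to lie in $\cN$) and the drift to be nonnegative at $(t,\bx)$. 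If you want to keep a contradiction structure, you would still need this penalisation (or an equivalent device) to handle controls that leave $\cN$; the scaling heuristic alone does not close the argument.
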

\begin{proof} 
The argument is very similar to \cite{soner2002stochastic}. However, as we are in an infinite dimensional setting and that we do not require $\dbA$ to be compact, we detail the proof.

\medskip
$(i)$ We first show the super-solution property. Let $\f$ be a test function; we may assume without loss of generality that $\f(t, \bx) = w(t, \bx)$ and that the minimum in the tangency property is global. Let $\a \in \cA(t, \bx, w(t, \bx))$ and introduce, for $\d > 0$, 
\begin{equation}\label{target-localisation}
 \th_\d \coloneqq \inf\big\{s \ge t : (s, \bX_s^{t, \bx, \a}) \notin [t,t+\d) \times B_\d(\bx)\big \}, 
 \end{equation}
where $B_\d(\bx)$ is the ball of radius $\d$ and centre $\bx$ in $H$.  As a consequence of the DPP \eqref{target-DPP}, we have
\[
Y_{\th_\d}^{t, \bx, w(t,\bx), \a} \ge w\big(\th_\d, \bX_{\th_\d}^{t, \bx, w(t,\bx), \a}\big) \ge \f\big(\th_\d, \bX_{\th_\d}^{t, \bx, w(t,\bx), \a}\big), \; \dbP\mbox{\rm--a.s.}. 
\]
Therefore, applying Itô's formula between $t$ and $\th_\d$, we obtain
\begin{align*}
&\int_t^{\th_\d} \Big(b\big(s, \bX_s^{t,\bx,\a}, Y_s^{t,\bx,w(t, \bx),\a}, \a_s\big) - \pa_t \f\big(s, \bX_s^{t, \bx, \a}\big) - \cL^{\a_s} \f\big(s, \bX_s^{t, \bx, \a}\big)\Big)\mathrm{d}s \\
&\quad+ \int_t^{\th_\d} \Big( \si\big(s, \bX_s^{t,\bx,\a}, Y_s^{t,\bx,w(t, \bx),\a}, \a_s\big) -  \langle \Si\big(s, \bX_s^{t, \bx, \a}, \a_s\big), D_\bx \f\big(s,\bX_s^{t, \bx, \a}\big) \rangle  \Big)\mathrm{d}W_s \ge 0,
\end{align*}
$\dbP$--a.s. For $n \in \dbN$, we now introduce the measure $\dbP^n$ defined by
\[
 \frac{\mathrm{d}\dbP^n}{\mathrm{d}\dbP} \coloneqq \cE\bigg(-n \int_t^{T \wedge \th_\smalltext{\d}} \big( \si(s, \bX_s^{t,\bx,\a}, Y_s^{t,\bx,w(t, \bx),\a}, \a_s) - \Si(s, \bX_s^{t, \bx, \a}, \a_s) \big)\mathrm{d}W_s\bigg), \]
so that taking the expectation under $\dbP^n$ in the above inequality provides
\begin{align*}
&\dbE^{\dbP^n}\bigg[\int_t^{\th_\smalltext{\d}} \Big(b\big(s, \bX_s^{t,\bx,\a}, Y_s^{t,\bx,w(t, \bx),\a}, \a_s\big) -  \pa_t \f\big(s, \bX_s^{t, \bx, \a}\big) - \cL^{\a_s} \f\big(s, \bX_s^{t, \bx, \a}\big)\Big)\mathrm{d}s \\
&\quad -n \int_t^{\th_\smalltext{\d}} \Big( \si\big(s, \bX_s^{t,\bx,\a}, Y_s^{t,\bx,w(t, \bx),\a}, \a_s\big) - \langle \Si\big(s, \bX_s^{t, \bx, \a}, \a_s\big), D_\bx \f\big(s,\bX_s^{t, \bx, \a}\big) \rangle  \Big)^2 \mathrm{d}s \bigg] \ge 0.
\end{align*}
Then, {as $\alpha$ is required to be right-continuous, we may use the integral mean value theorem} and the fact that this inequality must be true for all $n \in \dbN$ to obtain
\[
 b\big(s, \bx, w(t,\bx), \a_t\big) - \pa_t \f(s, \bx) - \cL^{\a_t} \f(s, \bx) \ge 0, 
 \]
with $\a_t$ such that 
\[
 \big( \si(t, \bx, w(t, \bx), \a_t) - \langle \Si(t, \bx, \a_t), D_\bx \f(t,\bx) \rangle  \big)^2 = 0,
 \]
which proves the super-solution property. 

\medskip
$(ii)$ We now prove the sub-solution property. Given a test function $\f$, we may assume without loss of generality that $\f(t, \bx) = w(t, \bx)$ and that 
\begin{equation}\label{ineq-target1}
\inf_{(s, \tilde \bx) \in \pa_\smalltext{p} B_\d(t, \bx)}\f(s, \tilde \bx) - w(s, \tilde \bx) \ge \d > 0,
\end{equation}
where $\pa_pB_\d(t, \bx) \coloneqq \{t + \d\} \times {\rm cl}(B_\d(\bx)) \cup [t, t+\d] \times \pa B_\d(\bx)$ is the parabolic border of $B_\d(t, \bx) \coloneqq [t,t+\d) \times B_\d(\bx)$. This can be achieved for example by adding a term in $\lvert \tilde \bx - \bx \rvert_H^4$ to the test function. 

\medskip
We shall prove the sub-solution property by contradiction. Assume that 
\begin{equation}\label{contradiction}
- \pa_t u(t, \bx) + \sup_{a \in \cN(t, \bx, u(t,\bx), D_\smalltext{\bx} u(t,\bx))} \big\{ b(t, \bx, u(t,\bx),a) - \cL^a u(t,\bx) \big\} > 0.
\end{equation}
By continuity of $\cN$, $\d$ can be chosen so that 
\[
 - \pa_t u(t, \bx) + \big\{ b\big(t, \bx, u(t,\bx),\hat a(s, \tilde \bx, \f(s, \tilde \bx), D_\bx f(s, \tilde \bx) )\big) - \cL^{a(s, \tilde \bx, \f(s, \tilde \bx), D_\smalltext{\bx} f(s, \tilde \bx) )} u(t,\bx) \big\} \ge \d ,
 \]
 for all $(s, \tilde \bx) \in B_\d(t, \bx)$, with the mapping $\hat a$ as in the assumptions of the theorem. Fix now $\eta > 0$, and let $(\bX^\eta, Y^\eta)$ be the solution of the SDEs \eqref{SDEtarget} such that 
 \[
  \bX_t^\eta = \bx, \ Y_t^\eta = w(t,\bx) - \eta, 
  \]
 and controlled by $\hat \a_s \coloneqq  \hat a(s, \bX_s^\eta, Y_s^\eta, D_\bx \f(s, X_s^\eta))$. Let also $\th_\d$ be as in \eqref{target-localisation}. We have
 \begin{align*}
 Y_{\th_\d}^\eta - w(\th_\d,  \bX_{\th_\d}^\eta) &=  Y_{\th_\d}^\eta - \f(\th_\d,  \bX_{\th_\d}^\eta)  + \f(\th_\d,  \bX_{\th_\d}^\eta) - w(\th_\d,  \bX_{\th_\d}^\eta) \ge  Y_{\th_\d}^\eta - \f(\th_\d,  \bX_{\th_\d}^\eta) + \d,
 \end{align*}
 by \eqref{ineq-target1}. Introduce now the process $\hat Y_s^\eta \coloneqq\f(s, \bX_s^\eta)-\eta$. As in \cite{soner2002stochastic}, we observe that $\hat Y^\eta$ satisfies the same SDE as $Y^\eta$ with a lower drift term, due to our hypothesis \eqref{contradiction}. Therefore, by stochastic comparison, we have $\hat Y^\eta \le Y^\eta$. Coming back to the previous inequalities, we have
 \begin{align*}
  Y_{\th_\d}^\eta - w(\th_\d,  \bX_{\th_\d}^\eta) &\ge Y_{\th_\d}^\eta - \hat Y_{\th_d}^\eta +  \hat Y_{\th_d}^\eta - \f(\th_\d,  \bX_{\th_\d}^\eta) + \d \ge - \eta + \d.
 \end{align*}
Thus, taking $\eta < \d$, we obtain a contradiction of the DPP \eqref{target-DPP}. Thus \eqref{contradiction} is false and the viscosity sub-solution property is satisfied. 
\end{proof}

\bibliography{bibliographyDylan}

\end{document}